\newtheorem{lemma}{Lemma}
\newtheorem{theorem}{Theorem}
\newtheorem{corollary}{Corollary}
\newtheorem{definition}{Definition}
\newtheorem{remark}{Remark}
\newtheorem{example}{Example}
\newtheorem{proposition}{Proposition}
\newcommand{\defeq}{\overset{\mathrm{def}}{=}}
\newenvironment{proof}{\paragraph*{Proof:}}{\hfill$\square$}
\begin{document}
\begin{frontmatter}
\title{Logarithmically Completely Monotonic Rational Functions}
\author[1]{Hamed Taghavian}\ead{hamedta@kth.se},    
\author[2]{Ross Drummond}\ead{ross.drummond@sheffield.ac.uk.},               
\author[1]{Mikael Johansson}\ead{mikaelj@kth.se}  
\address[1]{KTH Royal Institute of Technology, Stockholm, Sweden.}  
\address[2]{University of Sheffield, Sheffield, UK.}             

\begin{keyword}
Positive systems; non-overshooting control; completely monotonic functions.
\end{keyword}
\begin{abstract}
\replaced{This paper studies the class of logarithmically completely monotonic (LCM) functions.}{In this paper, the class of logarithmically completely monotonic (LCM) functions are studied.} \replaced{These functions}{This class of functions} play an important role in characterising \deleted{the class of} {externally positive} linear systems which find applications in \deleted{several} important control problems \replaced{such as}{including} non-overshooting reference tracking. Conditions are proposed to ensure a rational function is LCM, a result that enables the known space of linear continuous-time {externally positive} systems to be enlarged and an efficient and optimal pole-placement procedure for the monotonic tracking controller synthesis problem to be developed. The presented conditions are shown to be less conservative than existing approaches whilst being computationally tractable.
\end{abstract}
\end{frontmatter}
\section{Introduction}\label{sec:intro}
The class of completely monotonic functions \cite{Wid2015} play an important role in \deleted{many} diverse application areas, \replaced{ranging from}{including} probability theory \cite{Kim1974}\replaced{ and}{,} physics \cite{StW2020} \replaced{to}{and} combinatorics \cite{Bal1994,che2005}. \replaced{In automatic control,}{Moreover, for control engineers,} {complete monotonicity of transfer functions characterises the class of externally positive systems}\replaced{, that is, }{; being those}  systems {whose impulse responses are non-negative for all times,} mapping non-negative inputs to non-negative outputs~\cite{BSFG2017}. {Systems that preserve positivity in this way 
{are often encountered} in physical systems operating on ``positive quantities'' like ion concentrations and population sizes, as encountered in, for example, chemical systems \cite{LAS2007}, biological systems \cite{BSFG2017} and transportation systems \cite{lunzpa}. {
Many practical control systems also have to be designed  to ensure the positivity of certain signals. For example, many important transient response control problems require the appropriate transfer functions to be externally positive~\cite{MDB2009}. Therefore, studying externally positive systems has the potential to improve the capabilities of many control problems. For each of these problems, their solutions are currently limited by our inability to fully characterise the class of externally positive systems. As remarked earlier, externally positive systems can be characterized as completely monotonic transfer functions~\cite{Bry1999}. Therefore, the class of logarithmically completely monotonic (LCM) rational functions introduced in~\cite{Bal1994} become relevant, as all these functions are also completely monotonic.}
In this paper, we build upon this work and present several novel results for LCM functions. In particular, we provide a set of conditions that characterise a larger family of LCM rational functions than what was previously available, and present generalisations of the results in~\cite{Bal1994} to systems with complex zeros and poles.}

{It appears that no simple\deleted{, yet tight,} condition\deleted{s} \deleted{(posed in either the time or frequency domains)} can be obtained for characterising \added{all} {externally positive systems in terms of their transfer function coefficients or pole-zero locations}}~\cite{ScL2020}. \replaced{In}{However, in} spite of such difficulties, several sufficient conditions \replaced{that}{to} ensure \added{that} a given transfer function is externally positive have been developed (see, \emph{e.g.},   \cite{DTD2019,LiB2008,lunzpa,cdc2}), all of which have some degree of conservatism. In this paper,  sufficient conditions for guaranteeing that a transfer function is externally positive are also developed, but the framework of LCM rational functions is exploited to reduce the conservatism of earlier results. This reduced conservatism is demonstrated through both relaxed assumptions on the system and better performance in numerical examples\deleted{ numerical examples and also by their relaxed assumptions on the system properties}. For example, in contrast to~\cite{DTD2019}, the conditions provided in this paper are applicable to systems with multiple dominant poles and expose all externally positive systems of up to order two. Furthermore, as opposed to~\cite{LiB2008,lunzpa,cdc2}, the presented conditions can also be applied to transfer functions with complex-conjugate zeros and poles.

{By using the developed LCM conditions to enlarge the class of systems that can be guaranteed to be externally positive, our results are expected to offer improved solutions to several problems in control theory, including transient response control~\cite{MDB2012}, Zames-Falb multiplier search methods~\cite{TuD2019} and the problem of synthesizing controllers that ensure non-overshooting reference tracking~\cite{TaJ2020}.} Like the characterization problem, designing output feedback control loops {with a non-negative closed-loop impulse response} is \replaced{an}{also an old yet} open problem in control theory \cite{ScL2020}, even though it plays an important role in application areas such as traffic control~\cite{lunzpa} and robotics~\cite{ECL1993}. Specifically, it was shown in \cite{ScL2021} that designing adaptive cruise control systems based on {this property} instead of string stability can guarantee collision avoidance in vehicle platoons. Furthermore, imposing {non-negativity on the impulse response samples of the closed loop system} is a popular way to eliminate both overshoots and undershoots in the system response~\cite{MTNS}. {The importance of non-overshooting reference tracking has led to the development of several interesting techniques to address this problem, such as  the state-feedback controllers of \cite{ScN2010}, the three-part controller structures of \cite{Dar2003} and the integrating controllers of \cite{ScL2020} to name just a few.} In this paper, we introduce a  pole-placement technique to synthesize feedback controllers that can minimize a general convex objective function whilst ensuring the closed-loop system {to be externally positive}, hence guaranteeing non-overshooting reference tracking. The proposed design technique has many advantages compared to the state-of-the art. Contrary to~\cite{ScN2010}, the presented conditions can be used to design output feedback controllers; in contrast to~\cite{Dar2003}, it is a computationally tractable and constructive approach based on convex optimization; and different from~\cite{ScL2020}, the plant under control is not required to have a non-negative impulse response. 

The paper is organized as follows. We clarify the relationship between LCM functions, completely monotonic functions and externally positive transfer functions in Section~\ref{sec:pre}. In Section~\ref{sec:lcm}, we derive a number of conditions that characterize LCM transfer functions. These conditions are either necessary or sufficient or both. In Section~\ref{sec:apps} we focus on the implication of the obtained results in control theory. In particular, we use the LCM conditions to characterize {the family of externally positive} systems and design optimal output feedback controllers ensuring a monotonic tracking. Finally conclusive remarks are presented in Section~\ref{sec:con}.

{
\subsection{Notation}
We use the following notation. The set of negative numbers is $\mathbb{R}_{<0}$ and the set of $n$-dimensional vectors with negative components is $\mathbb{R}^n_{<0}$. The Laplace transform is denoted by $\mathfrak{L}\lbrace \cdot \rbrace$ and its inverse by $\mathfrak{L}^{-1}\lbrace \cdot\rbrace$. The $k$-th derivative of function $H(s)$ is denoted by $H^{(k)}(s)$, while the $m$-th power of $H(s)$ is denoted by $H^{m}(s)$. For $z\in\mathbb{C}$, $\operatorname{Re}(z)$ denotes the real part, $\operatorname{Im}(z)$ denotes the imaginary part and $\angle (z)$ denotes its angle. Assuming $x,y\in\mathbb{R}^n$, weak majorization of $y$ by $x$ is denoted by $x \succ_w y$ and vector $x$ with the same components sorted in descending order is denoted by $x^{\downarrow}$. The $n$-fold Cartesian product of a set $I$ with itself is denoted by $I^n=I\times I\cdots \times I$. 
}

\section{Preliminaries}\label{sec:pre}
We focus on rational functions of the form
\begin{align}\label{eqn:transfer_function}
    H(s)&=\frac{B(s)}{A(s)}=K\frac{ \prod_{i=1}^{m} (s-z_i)}{\prod_{i=1}^n (s-p_i)}\nonumber\\
    &=\frac{ b_0 s^{n}+b_1 s^{n-1}+ \cdots +b_{n}}{ s^{n}+a_1 s^{n-1}+ \cdots +a_{n}}
\end{align}
where $K\neq 0$. For convenience, we introduce the vectors $z\in\mathbb{C}^m$ and $p\in \mathbb{C}^n$ whose components are the zeros $z_i$ and the poles $p_i$, respectively. Note that in this notation, $b_0=b_1=\cdots =b_{n-m-1}=0$ and $b_{n-m}=K$. 

\subsection{(Logarithmically) completely monotonic functions}

We begin by introducing the classes of completely monotonic and logarithmically completely monotonic functions and clarify the relations between the two.
%
\begin{definition}[\cite{Wid2015}]\label{def:CM}
A function $H:\mathbb{C} \to\mathbb{C}$ is called completely monotonic (CM) on $I\subseteq \mathbb{R}$ if
\begin{equation}\label{eqn:CM}
(-1)^{k}H^{(k)}(s)\geq 0
\end{equation}
holds for all $k\in \mathbb{N}_0$ and $s\in I$.
\end{definition}

\begin{definition}[\cite{QiC2004}]\label{def:LCM}
A function $H:\mathbb{C} \to\mathbb{C}$ is said to be logarithmically completely monotonic (LCM) on $I\subseteq \mathbb{R}$, if $H(s)>0$ and
$$
(-1)^{k}[\log H(s)]^{(k)}\geq 0
$$
holds for all $k\in \mathbb{N}$ and $s\in I$.
\end{definition}
It can be shown that LCM functions form a proper subset of CM functions \cite{QiC2004}. {In particular, a function  $H:\mathbb{C} \to\mathbb{C}$ is LCM if and only if $H^{1/k}$ is CM\deleted{, not only for $k=1$ but} for all $k\in \mathbb{N}$ \cite{BeC2004}.} 

\subsection{Externally positive systems}
Logarithmic complete monotonicity is used in this paper to characterise systems with non-negative impulse responses. 
\begin{definition}[\cite{BSFG2017}]\label{def:EP}
{The transfer function $H:\mathbb{C} \to\mathbb{C}$ is called externally positive\deleted{\cite{BSFG2017}} if its inverse Laplace transform satisfies $h(t)=\mathfrak{L}^{-1}\lbrace H(s) \rbrace \geq 0$ for all $t \in [0,+\infty)$. \deleted{As in, if it is a transfer function with a non-negative impulse response.}}
\end{definition}
A linear time-invariant continuous-time system with an externally positive transfer function has a non-negative impulse response. Externally positive functions are closely related to CM functions through the Bernstein theorem \cite{SSV2012}. In particular, if we choose $I=(r_0,+\infty)$ where $r_0\geq \sigma(H)$ and $\sigma(H)=\max_i\left\lbrace \operatorname{Re}(p_i) \right\rbrace$ is the pole abscissa, then the transfer function $H(s)$ given by (\ref{eqn:transfer_function}) is completely monotonic on $I$  if and only if it is externally positive~\cite{EJC,Bry1999}. This means that every CM function is the Laplace transform of a non-negative function in the time domain and vice versa. 

Most importantly, if $H(s)$ is LCM, then it is CM, and hence externally positive. Therefore in this paper, we are interested in algebraic conditions on the zeros and poles of rational functions $H:\mathbb{C}\to \mathbb{C}$ of the form (\ref{eqn:transfer_function}) that ensure that $H(s)$ {is} LCM. These conditions also ensure the transfer function $H(s)$ is externally positive and, therefore, help characterise a large family of externally positive systems.

\subsection{Majorization theory}
{Our novel characterizations of LCM functions are derived based on majorization theory.}
Majorization theory, determines how ``spread out'' the components of two vectors are with respect to each other and is an important tool in \replaced{the study of}{formalization of} mathematical inequalities~\cite{MOA2010}. Notably\added{,} as shown in \cite{DTD2019}, majorization can be used to \replaced{derive an elegant characterization of a subset of}{neatly characterize a major set of} externally positive rational functions \deleted{(\ref{eqn:transfer_function})} in terms of their zeros and poles. 

\begin{definition}
For a vector $z\in {\mathbb R}^n$, let $z^{\downarrow}$ be the vector of the same components sorted in descending order. We say that $x\in {\mathbb R}^n$ weakly majorizes $y \in {\mathbb R}^n$, denoted $x\succ_w y$, if
$$
\sum_{i=1}^k x^{\downarrow}_i \geq \sum_{i=1}^k y^{\downarrow}_i
$$
{for all $k=1,2\cdots,n$.}
\end{definition} 
{The following proposition presents operators that preserve a majorization relation.}

\begin{proposition}[\cite{MOA2010}, Theorem A.2]\label{prop:preserving_majorization}
{Let $I\subseteq \mathbb{R}$, $x,y\in I^n$ and $g:I\to \mathbb{R}$ be convex and increasing on $I$.} If $x\succ_w y$, then
$$
g(x) \succ_w g(y)
$$
where $g(x)=\begin{bmatrix}g(x_1),\cdots,g(x_n)\end{bmatrix}^T$.
\end{proposition}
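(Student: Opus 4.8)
The plan is to reduce the statement to a single classical fact from majorization theory and then conclude almost immediately. First I would exploit that $g$ is \emph{increasing} in order to let it commute with sorting: for any $v\in I^n$, applying $g$ componentwise and then reordering in descending order produces the same vector as reordering first and then applying $g$, since an increasing map preserves the relative order of the entries. Thus $\bigl(g(v)\bigr)^{\downarrow}_i=g\bigl(v^{\downarrow}_i\bigr)$ for every $i$, and the conclusion $g(x)\succ_w g(y)$ is, by the very definition of weak majorization, nothing but the family of inequalities $\sum_{i=1}^{k}g\bigl(x^{\downarrow}_i\bigr)\ge\sum_{i=1}^{k}g\bigl(y^{\downarrow}_i\bigr)$ for $k=1,\dots,n$. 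In particular I may assume from the start that $x$ and $y$ are sorted in descending order.

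For the quickest finish I would use the functional characterisation of weak majorization: for $a,b\in\mathbb{R}^n$ one has $a\succ_w b$ if and only if $\sum_{i=1}^{n}\phi(a_i)\ge\sum_{i=1}^{n}\phi(b_i)$ for every increasing convex $\phi:\mathbb{R}\to\mathbb{R}$. Take any such $\phi$. Then $\phi\circ g$ is increasing, being a composition of increasing maps, and convex, being the composition of an increasing convex map with a convex map; hence it is again an admissible test function on $I$ (extend it affinely past the ends of $I$, with the one-sided slopes of $\phi\circ g$, if one wants it defined on all of $\mathbb{R}$). Applying the characterisation to the hypothesis $x\succ_w y$ with the test function $\phi\circ g$ gives $\sum_i\phi\bigl(g(x_i)\bigr)\ge\sum_i\phi\bigl(g(y_i)\bigr)$; as $\phi$ ranges over all increasing convex functions, the same characterisation applied to the pair $g(x),g(y)$ yields $g(x)\succ_w g(y)$.

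If a self-contained argument is preferred, I would instead invoke the lifting lemma — another classical fact — that $x\succ_w y$ implies the existence of $u\in I^n$ with $u\ge y$ componentwise and $x$ majorising $u$ in the ordinary sense (that is, $x\succ_w u$ together with $\sum_i u_i=\sum_i x_i$). Then for each fixed $k$ the symmetric map $\psi_k(v)=\sum_{i=1}^{k}g\bigl(v^{\downarrow}_i\bigr)$ is Schur-convex on $I^n$: this follows from the Schur--Ostrowski criterion, using that the right derivative of $g$ is nonnegative and nondecreasing because $g$ is increasing and convex, or equivalently from the direct observation that a Robin Hood transfer never raises any partial sum of the sorted transformed entries. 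Hence $\psi_k(x)\ge\psi_k(u)$. Finally, $u\ge y$ componentwise together with $g$ increasing gives $g(u_i)\ge g(y_i)$ for all $i$, so $\psi_k(u)=\max_{|S|=k}\sum_{i\in S}g(u_i)\ge\max_{|S|=k}\sum_{i\in S}g(y_i)=\psi_k(y)$. Chaining these for every $k$ reproduces $g(x)\succ_w g(y)$.

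The single substantive ingredient, shared by both routes, is the chosen structural theorem of majorization theory — the functional characterisation of $\succ_w$, or the lifting of weak to ordinary majorisation; that is where essentially all the content sits, and it is the part I would expect to be the real obstacle, the rest being bookkeeping. Two minor points deserve care: the reduction in the first step genuinely uses that $g$ is increasing and not merely convex, which is precisely why that hypothesis appears; and since a convex function on an interval need not be differentiable, the Schur--Ostrowski step should be phrased through one-sided derivatives (or via T-transforms directly), with component ties and the endpoints of $I$ handled by continuity.
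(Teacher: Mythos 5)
The paper does not prove this proposition at all --- it is quoted directly from the majorization literature (\cite{MOA2010}, Theorem A.2) and used as a black box --- so there is no in-paper argument to compare against; the question is only whether your proof stands on its own, and it does. Your first route is correct: $g$ increasing lets sorting commute with the componentwise application of $g$, and the functional characterisation of weak majorization (the Tomi\'c/Hardy--Littlewood--P\'olya equivalence, whose ``only if'' direction is exactly the paper's Proposition~\ref{prop:Karamata}) applied to the test functions $\phi\circ g$, which are again increasing and convex, yields $g(x)\succ_w g(y)$; the converse direction needed at the end is recovered by hinge functions $t\mapsto(t-c)_+$, and your affine extension of $\phi\circ g$ beyond the ends of $I$ with the one-sided slopes is the right way to handle the domain issue. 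It is worth noting that this route effectively derives Proposition~\ref{prop:preserving_majorization} from Proposition~\ref{prop:Karamata}, i.e.\ from the other tool the paper already imports, which is economical. Your second, self-contained route is also sound: the lifting of weak to ordinary majorization ($x\succ u\ge y$ componentwise), Schur-convexity of $\psi_k(v)=\sum_{i=1}^k g(v^{\downarrow}_i)$ --- where, as you correctly flag, monotonicity of $g$ is genuinely needed for $k<n$ and one-sided derivatives handle non-smooth $g$ --- and then $g(u_i)\ge g(y_i)$ to pass from $u$ to $y$. The only caveat there is that $u\in I^n$ requires $I$ to be an interval (entries of $u$ lie between $\min_i y_i$ and $\max_i x_i$); since convexity on a non-interval domain is not meaningful anyway and the paper only ever uses intervals such as $[0,+\infty)$, this is a cosmetic rather than substantive gap.
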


{A special case of Proposition~\ref{prop:preserving_majorization} is the celebrated Karamata's inequality:}

\begin{proposition}[\cite{wey1949}]\label{prop:Karamata}
{Let $I\subseteq \mathbb{R}$, $x,y\in I^n$ and $g:I\to \mathbb{R}$ be convex and increasing on $I$.} If $x\succ_w y$, then
$$
\sum_{i=1}^n g(x_i) \geq \sum_{i=1}^n g(y_i)
$$
\end{proposition}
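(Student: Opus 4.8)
The plan is to obtain Karamata's inequality as an immediate corollary of Proposition~\ref{prop:preserving_majorization}. First I would invoke Proposition~\ref{prop:preserving_majorization} with the same set $I$, the same vectors $x,y\in I^n$ and the same convex increasing map $g$: since $x\succ_w y$, it yields the vector weak-majorization relation $g(x)\succ_w g(y)$, where $g(x)=[g(x_1),\dots,g(x_n)]^T$ and likewise for $g(y)$. Then I would read off the definition of $\succ_w$ at the index $k=n$. Because the sum of all components of a vector does not depend on the order in which they are listed, $\sum_{i=1}^n (g(x))^{\downarrow}_i=\sum_{i=1}^n g(x_i)$ and $\sum_{i=1}^n (g(y))^{\downarrow}_i=\sum_{i=1}^n g(y_i)$, so the $k=n$ inequality contained in $g(x)\succ_w g(y)$ is exactly $\sum_{i=1}^n g(x_i)\geq\sum_{i=1}^n g(y_i)$, which is the claim.

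Along this route there is essentially no obstacle; the only point worth stating explicitly is the permutation-invariance of the total sum, which is what turns the ``weak'' (partial-sum) statement of Proposition~\ref{prop:preserving_majorization} into the desired full-sum statement.

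If instead a self-contained argument is preferred, I would proceed by Abel summation. Relabel so that $x_1\geq\cdots\geq x_n$ and $y_1\geq\cdots\geq y_n$, and for each $i$ choose a subgradient $c_i$ of the convex function $g$ at $y_i$. Convexity makes $i\mapsto c_i$ nonincreasing (since $y_i$ is nonincreasing), monotonicity of $g$ makes each $c_i\geq 0$, and convexity also gives the supporting-line bound $g(x_i)\geq g(y_i)+c_i(x_i-y_i)$. Summing over $i$ and applying summation by parts,
\begin{align*}
\sum_{i=1}^n g(x_i)-\sum_{i=1}^n g(y_i) &\geq \sum_{i=1}^n c_i(x_i-y_i)\\
&=\sum_{k=1}^{n-1}(c_k-c_{k+1})S_k+c_nS_n,
\end{align*}
where $S_k=\sum_{i=1}^k(x_i-y_i)$. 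Each partial sum $S_k$ is nonnegative because $x\succ_w y$, each increment $c_k-c_{k+1}$ is nonnegative by convexity, and $c_n\geq 0$ by monotonicity, so the right-hand side is nonnegative and the claim follows. In this version the only mild technical points are that ``convex'' and ``increasing'' are precisely what guarantee, respectively, the ordering and the sign of the subgradients $c_i$ — exactly the two hypotheses of the proposition.
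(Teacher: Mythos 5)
Your first derivation is correct and is exactly how the paper treats this statement: the paper gives no standalone proof, simply observing that Karamata's inequality is ``a special case of Proposition~\ref{prop:preserving_majorization}'', which is precisely your step of invoking $g(x)\succ_w g(y)$ and reading off the $k=n$ partial-sum inequality, with the permutation-invariance of the total sum supplying the identification $\sum_i (g(x))^{\downarrow}_i=\sum_i g(x_i)$. Your second, self-contained argument via subgradients and Abel summation is also sound and is a genuinely different route: it does not lean on the cited Theorem~A.2 of \cite{MOA2010}, and it makes transparent where each hypothesis enters (convexity gives the supporting lines and the monotone ordering $c_1\geq\cdots\geq c_n$, monotonicity gives $c_n\geq 0$, and weak majorization gives $S_k\geq 0$). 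The only technical care it needs — which you flag implicitly — is a consistent subgradient choice (e.g.\ right derivatives, left derivative at a right endpoint of $I$) so that $c_i$ is indeed nonincreasing at ties and nonnegative at boundary points; with that, the elementary proof buys independence from the external reference, while the paper's route buys brevity by reusing the already-quoted majorization-preservation result.
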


Propositions~\ref{prop:Karamata} and \ref{prop:preserving_majorization} will be used in the next section to derive conditions under which (\ref{eqn:transfer_function}) is LCM.

\section{LCM conditions}\label{sec:lcm}
In this section, we are interested in conditions under {which rational functions $H:\mathbb{C}\to \mathbb{C}$ of the form of  (\ref{eqn:transfer_function}) are LCM}. Since (\ref{eqn:transfer_function}) is uniquely determined by $K\in \mathbb{R}$, $z\in\mathbb{C}^m$ and $p\in\mathbb{C}^n$, we seek such conditions in terms of the function{'s} zeros, poles and static gain. {We begin by introducing conditions that are both necessary and sufficient for (\ref{eqn:transfer_function}) to be LCM.}

\subsection{A full characterization}
The following lemma offers a characterization for when \deleted{the main characterizing condition for} (\ref{eqn:transfer_function}) \replaced{is}{to be} LCM in the general case\deleted{,} where $z\in \mathbb{C}^m$ and $p\in \mathbb{C}^n$.

\begin{lemma}\label{lem:N&S_sumexp}
{ $H:\mathbb{C}\to \mathbb{C}$ defined in 
(\ref{eqn:transfer_function}) is} logarithmically completely monotonic if and only if $K>0$ and
\begin{equation}\label{eqn:exp(pt)>exp(zt)}
    \sum_{i=1}^n \exp(p_i t) \geq \sum_{i=1}^m \exp(z_i t)
\end{equation}
holds for all $t\in [0,+\infty)$.
\end{lemma}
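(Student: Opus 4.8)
The plan is to reduce logarithmic complete monotonicity of $H$ to the ordinary complete monotonicity of a single auxiliary rational function — the (negative) logarithmic derivative of $H$ — and then to invoke the Bernstein-type correspondence between completely monotonic rational functions and externally positive ones recalled in Section~\ref{sec:pre}. Fix $I=(r_0,+\infty)$ with $r_0$ large enough that every $\operatorname{Re}(p_i)$ and every $\operatorname{Re}(z_i)$ lies to its left, so that $H$ is analytic and zero-free on $I$ and $\log H$ is well defined wherever $H>0$. On $I$ the logarithmic derivative of the product form of $H$ is
\begin{equation*}
G(s):=-\frac{d}{ds}\log H(s)=-\frac{H'(s)}{H(s)}=\sum_{i=1}^{n}\frac{1}{s-p_i}-\sum_{i=1}^{m}\frac{1}{s-z_i},
\end{equation*}
which is real on $I$ since the non-real $p_i$ and $z_i$ occur in conjugate pairs. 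Differentiating term by term gives $(-1)^k G^{(k)}(s)=k!\bigl(\sum_i (s-p_i)^{-k-1}-\sum_i (s-z_i)^{-k-1}\bigr)$ and, similarly, $(-1)^k[\log H(s)]^{(k)}=(k-1)!\bigl(\sum_i (s-p_i)^{-k}-\sum_i (s-z_i)^{-k}\bigr)$. Comparing these, $G$ is CM on $I$ exactly when $\sum_i (s-p_i)^{-k}\ge \sum_i (s-z_i)^{-k}$ holds on $I$ for every $k\in\mathbb{N}$, which is precisely the family $(-1)^k[\log H(s)]^{(k)}\ge 0$, $k\in\mathbb{N}$ (the $k=0$ case of ``$G$ CM'' matching the $k=1$ case of ``$\log H$''). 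Hence $H$ is LCM on $I$ if and only if $H(s)>0$ on $I$ \emph{and} $G$ is CM on $I$.

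For the positivity half I would argue directly: for real $s\in I$ each conjugate pair contributes $(s-z)(s-\bar z)=(s-\operatorname{Re}z)^2+(\operatorname{Im}z)^2>0$, each real zero gives $s-z_i>0$, and likewise for the poles, so $H(s)$ has the sign of $K$ throughout $I$; thus $H(s)>0$ on $I$ is equivalent to $K>0$. For the monotonicity half, note $G$ is a strictly proper rational function whose pole abscissa is at most $r_0$, so the equivalence stated in Section~\ref{sec:pre} (Bernstein's theorem for rational functions, \cite{EJC,Bry1999}) applies to $G$: it is CM on $I$ if and only if $g(t):=\mathfrak{L}^{-1}\{G(s)\}(t)\ge 0$ for all $t\ge 0$. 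Since $\mathfrak{L}^{-1}\{(s-c)^{-1}\}(t)=e^{ct}$, linearity yields $g(t)=\sum_{i=1}^{n}e^{p_i t}-\sum_{i=1}^{m}e^{z_i t}$, counted with the same multiplicities as in (\ref{eqn:transfer_function}); therefore $G$ is CM on $I$ if and only if (\ref{eqn:exp(pt)>exp(zt)}) holds. Combining the two halves gives the claimed equivalence.

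The step I expect to be the real obstacle is the backward direction of the monotonicity half: passing from the infinitely many pointwise inequalities $(-1)^k[\log H]^{(k)}\ge 0$ to global nonnegativity of the time-domain exponential sum $g$. This is exactly the nontrivial direction of the Bernstein correspondence, which I would import as the cited result rather than reprove. Two bookkeeping points need care. First, the auxiliary function $G$ carries the zeros $z_i$ of $H$ as additional poles, so $r_0$ must be taken to the right of $\max_i\operatorname{Re}(z_i)$ as well as of $\sigma(H)$ in order to apply the rational Bernstein statement to $G$ verbatim; with the paper's convention that logarithmic complete monotonicity of a transfer function is understood on such a right half-line, this is harmless. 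Second, one should observe that even when the lists $p$ or $z$ contain repeated values, $G$ still has only simple poles (a value repeated $r$ times contributing $r/(s-c)$), so $\mathfrak{L}^{-1}\{G\}$ is the plain exponential sum in (\ref{eqn:exp(pt)>exp(zt)}) with no polynomial-in-$t$ factors, consistent with the multiplicity convention in the statement.
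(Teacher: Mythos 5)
Your proposal is correct and follows essentially the same route as the paper's own proof: reduce logarithmic complete monotonicity to complete monotonicity of the logarithmic-derivative function $G(s)=\sum_i (s-p_i)^{-1}-\sum_i (s-z_i)^{-1}$ via the closed form of $(-1)^k[\log H(s)]^{(k)}$, then invoke the Bernstein/external-positivity correspondence to translate CM of $G$ into nonnegativity of $g(t)=\sum_i e^{p_i t}-\sum_i e^{z_i t}$. Your extra bookkeeping (taking $r_0$ to the right of the zeros as well, and noting that repeated zeros or poles still give only simple poles of $G$) is a welcome tightening of details the paper leaves implicit, but it does not change the argument.
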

\begin{proof}
    First note that $H(s)>0$ holds for all $s>\sigma(H)$ if and only if $K>0$. Calculating the successive derivatives of $\log H(s)$ reveals that (\ref{eqn:transfer_function}) is LCM if and only if
    \begin{align}\label{eqn:logF^(k)}
        &(-1)^k[\log H(s)]^{(k)}\nonumber\\
        &=(k-1)!\left( \sum_{i=1}^n \frac{1}{(s-p_i)^k} -\sum_{i=1}^m \frac{1}{(s-z_i)^k}\right) \nonumber\\
        &=(-1)^{k-1}G(s)^{(k-1)} \geq 0
    \end{align}
    holds for all $k \in \mathbb{N}$, where
    \begin{equation}\label{eqn:mixed_relax}
    G(s)=\sum_{i=1}^n \frac{1}{s-p_i} -\sum_{i=1}^m \frac{1}{s-z_i}
    \end{equation}
    According to (\ref{eqn:CM}), this is equivalent to $G(s)$ being completely monotonic or equivalently, externally positive \cite{EJC,Bry1999}. This is realized if and only if $g(t)=\mathfrak{L}^{-1}\lbrace G(s)\rbrace$ is non-negative which yields inequality (\ref{eqn:exp(pt)>exp(zt)}).
\end{proof}

{Lemma~\ref{lem:N&S_sumexp} is a significant improvement of a corresponding result in \cite{Bal1994}, which only proved the `if' part in the special case when $z\in\mathbb{R}_{<0}^{m}$ and $p\in\mathbb{R}_{<0}^{n}$.} The lemma immediately narrows down the family of LCM transfer functions (\ref{eqn:transfer_function}) by revealing a few trivial cases in which the function cannot be LCM.

\begin{proposition}\label{prop:necessary_conditions}
The following conditions are necessary for $H:\mathbb{C}\to \mathbb{C}$ of the form (\ref{eqn:transfer_function}) to be LCM:
\begin{enumerate}
    \item[(a)] $n\geq m$. 
    \item[(b)]  $\max{\operatorname{Re}(p_i)} \geq \max{\operatorname{Re}(z_i)}$.
    \item[(c)] $\sum_{i=1}^n p_i \geq \sum_{i=1}^m z_i$ when $n=m$.
\end{enumerate}
\end{proposition}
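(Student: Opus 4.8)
The plan is to derive all three necessary conditions directly from Lemma~\ref{lem:N&S_sumexp}, i.e.\ from the inequality $\sum_{i=1}^n \exp(p_i t) \geq \sum_{i=1}^m \exp(z_i t)$ for all $t\in[0,+\infty)$, by studying the behaviour of both sides as $t\to+\infty$ and as $t\to 0^+$. Since $z$ and $p$ may be complex, I would first note that they come in conjugate pairs (because $H$ has real coefficients), so $\sum_i \exp(p_i t)$ and $\sum_i \exp(z_i t)$ are real-valued functions of $t$; concretely $\sum_i \exp(p_i t) = \sum_i e^{\operatorname{Re}(p_i)t}\cos(\operatorname{Im}(p_i)t)$, and similarly for the zeros.

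For part~(b), I would look at the growth rate as $t\to+\infty$. Let $\alpha = \max_i \operatorname{Re}(p_i)$ and $\beta = \max_i \operatorname{Re}(z_i)$. The right-hand side is bounded below by a subsequence of $t$-values where all the cosine factors attached to zeros of maximal real part are close to $1$ (one can choose such $t_k\to\infty$ by simultaneous Dirichlet-type approximation, or more simply observe that the dominant term's envelope is $e^{\beta t}$ up to bounded oscillation and there is at least one real zero of real part $\beta$ if $\beta$ is attained only by a real zero, and a conjugate pair otherwise — in the conjugate-pair case pick $t_k$ with $\operatorname{Im}(z)t_k \equiv 0 \pmod{2\pi}$). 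Along such $t_k$ the right side grows like $e^{\beta t_k}$, while the left side is always $\le n\, e^{\alpha t}$. If $\beta > \alpha$ this forces $n e^{\alpha t_k} \ge c\, e^{\beta t_k}$ for large $k$, a contradiction, so $\beta \le \alpha$, which is~(b). Part~(a) then follows quickly: if $n < m$, evaluate the inequality at $t=0$ to get $n = \sum 1 \ge \sum 1 = m$, contradiction; so $n\ge m$.

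For part~(c), with $n=m$, the inequality at $t=0$ is an equality ($n=m$), so I would expand both sides to first order in $t$: $\sum_i \exp(p_i t) = n + t\sum_i p_i + O(t^2)$ and $\sum_i \exp(z_i t) = n + t\sum_i z_i + O(t^2)$ (the sums $\sum p_i$, $\sum z_i$ are real because of conjugate symmetry — indeed $\sum p_i = -a_1$ and $\sum z_i = -b_1/b_0$ in the monic notation). Subtracting, the inequality $\sum_i \exp(p_i t) - \sum_i \exp(z_i t)\ge 0$ for all small $t>0$ together with the value $0$ at $t=0$ forces the first derivative at $0$ to be nonnegative, i.e.\ $\sum_i p_i - \sum_i z_i \ge 0$, which is~(c).

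The main obstacle I anticipate is the asymptotic argument in part~(b) when the maximal real part among the zeros is attained only by a complex-conjugate pair (or several pairs with distinct imaginary parts): one must ensure the right-hand side is genuinely of order $e^{\beta t}$ along some sequence $t_k\to\infty$ rather than being suppressed by destructive oscillation. This is handled by choosing $t_k$ so that $\operatorname{Im}(z_j)\,t_k$ is near a multiple of $2\pi$ for the relevant $j$ (possible since we can just take $t_k = 2\pi k/\operatorname{Im}(z_j)$ when there is a single dominant pair, and use simultaneous Diophantine approximation / Kronecker's theorem for several pairs), making all those cosines close to $1$ while the contribution of strictly-smaller-real-part terms is negligible; once the lower bound $\sim c\,e^{\beta t_k}$ is in hand the contradiction with the uniform upper bound $n\,e^{\alpha t}$ is immediate. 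Parts~(a) and~(c) are then routine via evaluation and differentiation at $t=0$.
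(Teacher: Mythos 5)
Your proposal is correct and follows essentially the same route as the paper: all three conditions are extracted from the inequality \eqref{eqn:exp(pt)>exp(zt)} of Lemma~\ref{lem:N&S_sumexp}, with (a) from evaluation at $t=0$, (b) from the growth rate as $t\to+\infty$, and (c) from the sign of the derivative at $t=0$ when both sides agree there. Your extra care in (b) — choosing a subsequence $t_k\to\infty$ along which the cosine factors of any dominant complex-conjugate zeros are near $1$ — is a welcome refinement of the paper's terser ``divide by $\exp(\max\operatorname{Re}(z_i)t)$'' step, which silently assumes the oscillatory terms do not destructively cancel, but it is the same argument in substance.
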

\begin{proof}
Condition (a) is follows from setting $t=0$ in (\ref{eqn:exp(pt)>exp(zt)}).

Condition (b) is proved by contradiction. Assume that $\max{\operatorname{Re}(z_i)}>\max{\operatorname{Re}(p_i)}$. {Dividing} both sides of (\ref{eqn:exp(pt)>exp(zt)}) by $\exp(\max{\operatorname{Re}(z_i)}t)$ would violate the inequality {as} $t\to +\infty$. {Hence,} if (\ref{eqn:transfer_function}) is LCM, then $\max{\operatorname{Re}(z_i)}\leq \max{\operatorname{Re}(p_i)}$ holds true.

\noindent Condition (c) has also been mentioned in \cite{Bal1994}. Here, we use an alternative way to show its necessity: When $n=m$, both sides of (\ref{eqn:exp(pt)>exp(zt)}) are equal at $t=0$. Therefore, for (\ref{eqn:exp(pt)>exp(zt)}) to hold for all $t\geq 0$, it has to hold for the derivatives at $t=0$ as well, namely for
$$
\sum_{i=1}^n p_i\exp(p_i t) \geq \sum_{i=1}^n z_i\exp(z_i t)
$$
at $t=0$. This proves the third inequality.
\end{proof}

The main drawback of Lemma~\ref{lem:N&S_sumexp} is that it requires validating inequality (\ref{eqn:exp(pt)>exp(zt)}) {on the infinite range $t \in [0, \infty)$}. {In practice, checking \eqref{eqn:exp(pt)>exp(zt)} is often implemented (naively) by sampling $t \in [0, \infty)$ and evaluating the condition at the chosen time instants. But this procedure does not provide any formal guarantees of the inequality holding in the semi-infinite range.} 
{A potential solution to this problem involves restricting the poles $p$ and zeros $z$ to some subsets. For example, it can be shown that if $z$ and $p$} have only real commensurable components \emph{i.e.} assuming that there is some $\gamma\in (0,+\infty)$ such that $z_i/\gamma, p_i/\gamma \in \mathbb{N}_0$, one may convert (\ref{eqn:exp(pt)>exp(zt)}) to the following polynomial inequality
\begin{equation}\label{eqn:N&S_poly}
    P(x)=\sum_{i=1}^n x^{p_i/\gamma} - \sum_{i=1}^m x^{z_i/\gamma} \geq 0, \quad \forall x\in [1,+\infty)
\end{equation}
{after applying the change-of-variables} $x=\exp(\gamma t)$. For fixed poles and zeros, Condition (\ref{eqn:N&S_poly}) can be {readily} checked {via semi-definite programming} using the Markov-Luk{\'a}cs theorem \cite{BPS2012,MDB2009}. However, in addition to {restricting the zeros and poles to be real commensurable, this method does not allow for any convenient way to optimize over the pole and zero locations.} 

\subsection{Sufficient conditions}
The aim of this section is to obtain sufficient conditions for (\ref{eqn:transfer_function}) to be LCM. In contrast with Lemma~\ref{lem:N&S_sumexp}, these conditions are no longer necessary for logarithmic complete monotonicity, but they can be checked using a finite number of inequalities which makes them useful for control synthesis. 

\subsubsection{Real zeros and poles}
In~\cite{Bal1994}, it was shown that a rational transfer function (\ref{eqn:transfer_function}) with $K>0$ is LCM if 
\begin{equation}\label{eqn:Ball}
    p,z\in (-\infty,0]^n \textnormal{ and } p \succ_w z.
\end{equation}
This is an elegant condition, but {it can sometimes be conservative and it is restricted to systems with real zeros and poles.}
In the following theorem, we present a sufficient condition for LCM functions which is always less conservative than (\ref{eqn:Ball}) and has a straight-forward extension to complex zeros and poles. {Compared to \eqref{eqn:Ball}, the main point of differentiation with this theorem is the inclusion of the terms $\mu$ and $\delta$, with $\mu$ helping to reduce the conservatism  by scaling the conditions and $\delta$ simply introducing a translation of the poles and zeros to allow positive values to be considered.} 

\begin{theorem}\label{thm:p^mu>z^mu}
The function $H(s)$ in (\ref{eqn:transfer_function}) {with $z_i,\, p_i\in \mathbb{R}$}, $m=n$ and $K>0$  
is LCM if 
\begin{align}
    (p+\delta) ^{\mu} &\succ_w  (z+\delta)^{\mu} \nonumber
    \intertext{and}
    \sum_{i=1}^n (p_i+\delta)^k  &\geq \sum_{i=1}^n (z_i+\delta)^k \quad \forall k\in \left\{1, \dots, \mu-1 \right\}\label{eqn:psum(p)>psum(z),1_mu}
\end{align}
holds for some $\mu \in \mathbb{N}$ and some $\delta>-\min_{i,j}\lbrace p_i,z_j\rbrace$.
\end{theorem}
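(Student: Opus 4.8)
The plan is to reduce the statement to Lemma~\ref{lem:N&S_sumexp} and then to prove the resulting exponential-sum inequality by showing that \emph{all} power sums of the shifted poles dominate those of the shifted zeros. First I would introduce $\tilde p_i \defeq p_i+\delta$ and $\tilde z_i \defeq z_i+\delta$; the assumption $\delta > -\min_{i,j}\{p_i,z_j\}$ ensures $\tilde p_i>0$ and $\tilde z_i>0$ for every $i$. Since $\sum_i \exp(p_i t) = e^{-\delta t}\sum_i \exp(\tilde p_i t)$ and likewise for the zeros, Lemma~\ref{lem:N&S_sumexp} tells us that (given $K>0$, which is assumed) it suffices to prove
$$\sum_{i=1}^{n}\exp(\tilde p_i t)\ \ge\ \sum_{i=1}^{n}\exp(\tilde z_i t), \qquad \forall\, t\in[0,+\infty).$$

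The key step is the claim that $\sum_{i=1}^{n}\tilde p_i^{\,k}\ge\sum_{i=1}^{n}\tilde z_i^{\,k}$ for every $k\in\mathbb{N}_0$. For $k=0$ this is just $n=n$; for $1\le k\le\mu-1$ it is exactly hypothesis~\eqref{eqn:psum(p)>psum(z),1_mu} written in the shifted variables. For $k\ge\mu$ I would invoke Karamata's inequality (Proposition~\ref{prop:Karamata}) on the interval $I=(0,+\infty)$, applied to the vectors $(p+\delta)^{\mu}\succ_w(z+\delta)^{\mu}$ --- whose entries $\tilde p_i^{\,\mu},\tilde z_i^{\,\mu}$ all lie in $I$ --- with $g(x)=x^{k/\mu}$, which is increasing and convex on $(0,+\infty)$ precisely because $k/\mu\ge1$. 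This gives $\sum_i(\tilde p_i^{\,\mu})^{k/\mu}\ge\sum_i(\tilde z_i^{\,\mu})^{k/\mu}$, i.e.\ $\sum_i\tilde p_i^{\,k}\ge\sum_i\tilde z_i^{\,k}$.

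It then remains to expand each exponential in its everywhere-convergent Taylor series and interchange the finite sum over $i$ with the series in $k$:
$$\sum_{i=1}^{n}\exp(\tilde p_i t)=\sum_{k=0}^{\infty}\frac{t^{k}}{k!}\sum_{i=1}^{n}\tilde p_i^{\,k}\ \ge\ \sum_{k=0}^{\infty}\frac{t^{k}}{k!}\sum_{i=1}^{n}\tilde z_i^{\,k}=\sum_{i=1}^{n}\exp(\tilde z_i t),$$
where for $t\ge0$ each coefficient $t^{k}/k!$ is non-negative so the power-sum inequalities pass term-by-term to the sums. With $K>0$, Lemma~\ref{lem:N&S_sumexp} then yields that $H(s)$ is LCM. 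I expect the only genuinely delicate point to be the bookkeeping around the translation $\delta$ --- making sure the entries stay in a domain where $x\mapsto x^{k/\mu}$ is actually convex and increasing and where the Taylor coefficients have a fixed sign; once the power sums are shown to dominate for all $k$, the exponential inequality, and hence the theorem, follows essentially mechanically.
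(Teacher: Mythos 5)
Your proposal is correct and follows essentially the same route as the paper's proof: hypothesis \eqref{eqn:psum(p)>psum(z),1_mu} for $k\le\mu-1$, Karamata's inequality (Proposition~\ref{prop:Karamata}) with $g(x)=x^{k/\mu}$ for $k\ge\mu$, then term-by-term Taylor comparison and Lemma~\ref{lem:N&S_sumexp}. Your explicit handling of the shift via the factor $e^{-\delta t}$ is a small point the paper glosses over, but it does not constitute a different argument.
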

\begin{proof}
{Since $n=m$, inequality (\ref{eqn:psum(p)>psum(z),1_mu}) is satisfied with equality for $k=0$. By assumption, the inequality also holds for $1\leq k\leq \mu -1$. To show that (\ref{eqn:psum(p)>psum(z),1_mu}) is met for $k\geq \mu$, recall that Proposition~\ref{prop:Karamata} states that the relation $(p+\delta) ^{\mu} \succ_w (z+\delta) ^{\mu}$ implies that
\begin{equation}\label{eqn:sumgp>sumgz}
    \sum_{i=1}^n g\left((p_i+\delta)^\mu\right) \geq \sum_{i=1}^n g\left((z_i+\delta)^\mu\right)
\end{equation}
for every convex and increasing function {$g:I\to \mathbb{R}$}. Choosing {$I=[0,+\infty)$ and} $g(x)=x^{k/\mu}$ in (\ref{eqn:sumgp>sumgz}) proves inequality (\ref{eqn:psum(p)>psum(z),1_mu}) for all $k\in\lbrace  \mu,  \mu+1,\cdots\rbrace$, because for these values of $k$, $g$ is convex and increasing {on $I$} and $(z_i+\delta),(p_i+\delta)\in I$ holds for all $i$ as $\delta>-\min_{i,j}\lbrace p_i,z_j\rbrace$. Hence, since (\ref{eqn:psum(p)>psum(z),1_mu}) holds for all $k\geq 0$,} one may write
\begin{equation}\label{eqn:sum(pt)^k/k!>sum(zt)^k/k!}
\sum_{i=1}^n \frac{(p_i+\delta)^k t^k}{k!} \geq \sum_{i=1}^n \frac{(z_i+\delta)^k t^k}{k!}
\end{equation}
for all $k\in\mathbb{N}_0$ and $t\in[0,\infty)$. {Summing} both sides of (\ref{eqn:sum(pt)^k/k!>sum(zt)^k/k!}) over $k\in\mathbb{N}_0$ results in (\ref{eqn:exp(pt)>exp(zt)}){, and, therefore the function $H:\mathbb{C}\to \mathbb{C}$ of the form \eqref{eqn:transfer_function}} is LCM due to Lemma~\ref{lem:N&S_sumexp}.
\end{proof}

Considering $\mu=1$ in Theorem~\ref{thm:p^mu>z^mu} recovers Ball's condition (\ref{eqn:Ball}). {The following proposition indicates that the conditions in Theorem~\ref{thm:p^mu>z^mu} are, in general, less conservative than the condition (\ref{eqn:Ball}) from \cite{Bal1994}.}

{ 
\begin{proposition}\label{prop:mu}
Increasing $\mu$ in Theorem~\ref{thm:p^mu>z^mu} enlarges the set of rational functions (\ref{eqn:transfer_function}) that can be proven to be LCM.
\end{proposition}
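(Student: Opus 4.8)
The goal is to show that if a rational function $H(s)$ of the form (\ref{eqn:transfer_function}) with $m=n$, $z_i, p_i\in\mathbb{R}$, $K>0$ is certified LCM by Theorem~\ref{thm:p^mu>z^mu} with parameter $\mu$, then it is also certified with parameter $\mu+1$ (for a suitable choice of $\delta$, possibly the same one). In other words, the feasible set of pole-zero configurations for the $\mu$-version of the conditions is contained in that for the $(\mu+1)$-version. The plan is to fix $\delta$ and assume that $(p+\delta)^{\mu}\succ_w (z+\delta)^{\mu}$ and $\sum_i (p_i+\delta)^k \geq \sum_i (z_i+\delta)^k$ for $k=1,\dots,\mu-1$, and then deduce the $(\mu+1)$-conditions: $(p+\delta)^{\mu+1}\succ_w (z+\delta)^{\mu+1}$ together with $\sum_i (p_i+\delta)^k \geq \sum_i (z_i+\delta)^k$ for $k=1,\dots,\mu$.

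First I would handle the power-sum inequalities, which are the easy half. Write $a_i = p_i+\delta \geq 0$ and $c_i = z_i+\delta\geq 0$. We are given $\sum_i a_i^k \geq \sum_i c_i^k$ for $k=0,1,\dots,\mu-1$ (the $k=0$ case is the equality $n=n$), and we want the same for $k=\mu$. This is precisely what the proof of Theorem~\ref{thm:p^mu>z^mu} already establishes: $(p+\delta)^\mu \succ_w (z+\delta)^\mu$ together with Karamata's inequality (Proposition~\ref{prop:Karamata}) applied to the convex increasing function $g(x)=x^{\mu/\mu}=x$ on $I=[0,\infty)$—or more directly, weak majorization of the vectors $a^\mu$ and $c^\mu$ gives $\sum_i a_i^\mu = \sum_i (a_i^\mu) \geq \sum_i (c_i^\mu) = \sum_i c_i^\mu$ by taking $k=n$ in the definition of $\succ_w$. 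So the new power-sum constraint at $k=\mu$ is automatic, and the old ones at $k=1,\dots,\mu-1$ are inherited verbatim.

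The substantive step is showing $(p+\delta)^{\mu+1}\succ_w (z+\delta)^{\mu+1}$, i.e. $a^{\mu+1}\succ_w c^{\mu+1}$, given $a^\mu\succ_w c^\mu$ and the lower-order power-sum inequalities. Here I would invoke Proposition~\ref{prop:preserving_majorization}: weak majorization is preserved under a convex increasing map applied componentwise. Apply it to $x=a^\mu$, $y=c^\mu$ and $g(t)=t^{(\mu+1)/\mu}$ on $I=[0,\infty)$; since $(\mu+1)/\mu>1$, $g$ is convex and increasing on $[0,\infty)$, and all components $a_i^\mu, c_i^\mu$ lie in $I$. Then $g(a^\mu)=a^{\mu+1}\succ_w c^{\mu+1}=g(c^\mu)$, which is exactly the $(\mu+1)$-majorization condition. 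Thus every configuration feasible at level $\mu$ is feasible at level $\mu+1$, so the certified set is non-decreasing in $\mu$; to get \emph{strict} enlargement ("enlarges") one would additionally exhibit, or appeal to, an example feasible at some $\mu+1$ but not at $\mu$—e.g. a second-order configuration with real poles and zeros that violates $p\succ_w z$ (the $\mu=1$ case) yet satisfies the $\mu=2$ conditions, as promised in the introduction's discussion of exposing all externally positive systems of order two.

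The main obstacle is a subtle bookkeeping point rather than a deep one: one must make sure the same translation parameter $\delta$ works, and that the intermediate vectors $a^\mu$, $c^\mu$ genuinely lie in the domain $[0,\infty)$ where $t\mapsto t^{(\mu+1)/\mu}$ is convex and increasing—this is guaranteed by the standing hypothesis $\delta>-\min_{i,j}\{p_i,z_j\}$, so $a_i,c_i>0$ and hence $a_i^\mu, c_i^\mu>0$. A secondary care point is that Proposition~\ref{prop:preserving_majorization} as stated requires $x,y\in I^n$, which holds here. No new inequalities beyond Propositions~\ref{prop:preserving_majorization} and \ref{prop:Karamata} are needed, so the argument is short once the right convex function $t\mapsto t^{(\mu+1)/\mu}$ is identified.
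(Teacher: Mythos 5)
Your proposal is correct and follows essentially the same route as the paper's own proof: both derive the higher-order majorization $(p+\delta)^{\mu_2}\succ_w(z+\delta)^{\mu_2}$ from Proposition~\ref{prop:preserving_majorization} with the convex increasing power map, and both obtain the new power-sum inequalities from Proposition~\ref{prop:Karamata} (your $k=\mu$ case via the full-sum of the weak majorization is exactly Karamata with $g(x)=x$). The only cosmetic difference is that you argue the step $\mu\to\mu+1$ while the paper handles general $\mu_1<\mu_2$ directly; your remark that ``enlarges'' formally means non-strict inclusion (the paper proves no more) is also accurate.
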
}
\begin{proof}
Let $\mu_2>\mu_1$ and $p$ and $z$ satisfy the hypotheses of Theorem~\ref{thm:p^mu>z^mu} with $\mu=\mu_1$. Then, $(p+\delta)^{\mu_1} \succ_w (z+\delta)^{\mu_1}$ and invoking Proposition~\ref{prop:preserving_majorization} with {$I=[0,+\infty)$ and} $g(x)=x^{\mu_2/\mu_1}$ establishes that $(p+\delta)^{\mu_2}\succ_w (z+\delta)^{\mu_2}$. In addition, using Proposition~\ref{prop:Karamata} with $g(x)=x^{k/\mu_1}$ reveals that (\ref{eqn:psum(p)>psum(z),1_mu}) holds for $k\in\lbrace \mu_1 ,\cdots, \mu_2 -1 \rbrace$. We can thus conclude that $p$ and $z$ also satisfy the hypothesis of Theorem~\ref{thm:p^mu>z^mu} with $\mu=\mu_2$. {This indicates that the set of rational functions (\ref{eqn:transfer_function}) satisfying Theorem~\ref{thm:p^mu>z^mu} with $\mu=\mu_1$ is a subset of that with $\mu=\mu_2$.}
\end{proof}

We will now evaluate the sufficient conditions obtained in Theorem~\ref{thm:p^mu>z^mu} in a numerical example. {This example demonstrates} that the conservatism can disappear even with a finite value of $\mu$.

\begin{example}\label{ex:5th_order}
Consider the following transfer function
$$
{H(s)}=\frac{(s+2)(s+3)(s+5)(s+6)(s+8)}{(s-p_1)(s-p_2)(s+1)(s+4)(s+7)}
$$
The values of $p_1$ and $p_2$ for which the conditions in Theorem~\ref{thm:p^mu>z^mu} guarantee the system to be LCM are shown in Figure~\ref{fig:5th_order}, as the transparent colored regions on the right side of the specified boundaries. As anticipated by Proposition~\ref{prop:mu}, increasing $\mu$ enlarges the set of LCM functions detected by Theorem~\ref{thm:p^mu>z^mu}. Interestingly, the conservatism disappears with $\mu=3$ and the set of LCM functions characterized by the sufficient conditions of Theorem~\ref{thm:p^mu>z^mu} coincides with the exact set of LCM functions (and externally positive transfer functions). {This is evident in Figure~\ref{fig:5th_order} as the region specified with $\mu=3$ (red) includes all the pairs $(p_1,p_2)$ that satisfy
\begin{equation}\label{eqn:necexpos_sumpsumz}
\sum_{i=1}^n p_i \geq \sum_{i=1}^n z_i
\end{equation}
which is known to be necessary for $H(s)$ to be externally positive \cite{Bal1994}.
}
\end{example}

\begin{figure}
\begin{center}
        \includegraphics[width=0.8\linewidth]{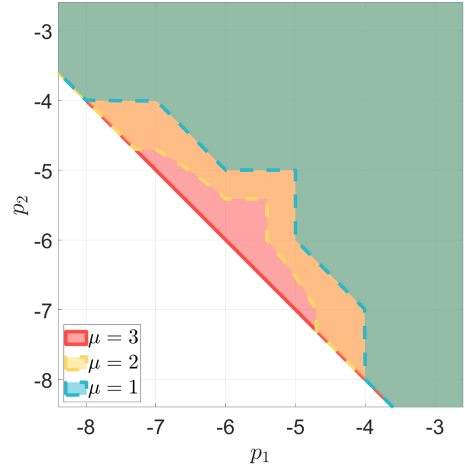}
	\caption{LCM regions found in Example~\ref{ex:5th_order}, by using different values of $\mu$ in Theorem~\ref{thm:p^mu>z^mu}.}
	\label{fig:5th_order}
\end{center}
\end{figure}

\subsubsection{Complex zeros and poles}
Extension of the condition (\ref{eqn:Ball}) from~\cite{Bal1994} to strictly proper systems ($m<n$) is straightforward by assuming that the missing zeros to be located at $z=-\infty$. Yet, (\ref{eqn:Ball}) is only applicable to systems with real zeros and poles.
Below, we extend Theorem~\ref{thm:p^mu>z^mu}, which contains (\ref{eqn:Ball}) as a special case, to strictly proper transfer functions as well as to complex zeros and poles. To this aim, we introduce $\theta\in\mathbb{R}^{n}$, $\phi\in \mathbb{R}^{m}$, $w,v\in\mathbb{R}^{n+m}$ based on $z$ and $p$:
\begin{align}\label{eqn:w,v}
    &\theta_i=\angle (p_i+\delta), \quad 1\leq i \leq n
    \nonumber\\
    &\phi_i=\angle (z_i+\delta), \quad 1\leq i \leq m
    \nonumber\\
    & w_i=\left\lbrace \begin{array}{ll}
        (p_i+\delta)^{\mu}, &  1\leq i \leq n_r\\
        0, & n_r+1 \leq i \leq n\\
        0, & n+1\leq i \leq n+m
    \end{array}\right.
    \nonumber\\
    & v_i=\left\lbrace \begin{array}{ll}
        0, & 1\leq i \leq n_r\\
        \vert p_i+\delta\vert^{\mu}, &  n_r+1\leq i \leq n\\
        \vert z_i+\delta \vert^{\mu}, & n+1\leq i \leq n+m
    \end{array}\right.
\end{align}
Here, $p_1,\dots,p_{n_r}$ are assumed to be real and $p_{n_r+1},\dots,p_{n}$ are the complex-conjugate poles. Note that $w$ contains all the {(shifted and scaled)} poles except the complex-conjugate ones and $v$ contains the absolute values of all the zeros and complex-conjugate poles. The next corollary constitutes our most general sufficient condition for a transfer function to be LCM.

\begin{corollary}\label{cor:complex}
Let $z\in\mathbb{C}^m$, $p\in\mathbb{C}^n$, $m\leq n$ and $K>0$. The function $H(s)$ is LCM if $w \succ_w v$ and
\begin{equation}\label{eqn:psum(p)>psum(z),1_mu_proper}
    \sum_{i=1}^n w_i^{k/\mu}+\sum_{i=1}^n v_i^{k/\mu}\cos(\theta_i k) \geq \sum_{i=n+1}^{n+m} v_i^{k/\mu}\cos(\phi_{i-n} k) 
\end{equation}
holds for all $k\in \lbrace 1,2,\cdots,\mu -1\rbrace$ and some $\mu \in \mathbb{N}$ and some $\delta>-\min_{i,j}\lbrace \operatorname{Re}(p_i),\operatorname{Re}(z_j)\rbrace$, where $\theta,\phi,w,v$ are defined in (\ref{eqn:w,v}).
\end{corollary}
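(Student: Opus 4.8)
The plan is to follow the template of the proof of Theorem~\ref{thm:p^mu>z^mu}: reduce to the exponential inequality \eqref{eqn:exp(pt)>exp(zt)} of Lemma~\ref{lem:N&S_sumexp}, verify it coefficientwise in a power-series expansion, and use $w\succ_w v$ together with Karamata's inequality to dispatch the tail coefficients. Since $K>0$ is assumed, by Lemma~\ref{lem:N&S_sumexp} it suffices to establish $\sum_{i=1}^n\exp(p_it)\geq\sum_{j=1}^m\exp(z_jt)$ for all $t\geq 0$. Multiplying both sides by $\exp(\delta t)>0$ and expanding each exponential in its everywhere-convergent power series, it is enough to prove the scalar inequality
$$
\sum_{i=1}^n (p_i+\delta)^k \;\geq\; \sum_{j=1}^m (z_j+\delta)^k , \qquad k\in\mathbb{N}_0 ,
$$
and then sum these against $t^k/k!$; both sides above are real numbers because the complex poles and zeros occur in conjugate pairs.

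The first step is to recognise this scalar inequality as hypothesis \eqref{eqn:psum(p)>psum(z),1_mu_proper}. Writing each shifted pole and zero in polar form gives the per-term identities $\operatorname{Re}((p_i+\delta)^k)=|p_i+\delta|^k\cos(k\theta_i)$ and $\operatorname{Re}((z_j+\delta)^k)=|z_j+\delta|^k\cos(k\phi_j)$, while conjugate symmetry makes $\sum_{i=1}^n(p_i+\delta)^k$ and $\sum_{j=1}^m(z_j+\delta)^k$ equal to the sums of their real parts. Splitting the poles into the $n_r$ real ones (for which $(p_i+\delta)^k=w_i^{k/\mu}$) and the complex-conjugate ones (contributing $\sum v_i^{k/\mu}\cos(k\theta_i)$), and reading off the zero sum as $\sum v_i^{k/\mu}\cos(k\phi_{i-n})$ from \eqref{eqn:w,v}, one sees that the scalar inequality for $k\in\{1,\dots,\mu-1\}$ is exactly \eqref{eqn:psum(p)>psum(z),1_mu_proper}; the case $k=0$ reads $n\geq m$, which holds since $m\leq n$.

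It remains to treat $k\geq\mu$, and this is where $w\succ_w v$ enters. For such $k$ the function $g(x)=x^{k/\mu}$ is convex and increasing on $I=[0,+\infty)$ because $k/\mu\geq 1$, and all components of $w$ and $v$ lie in $I$ since $\delta>-\min_{i,j}\{\operatorname{Re}(p_i),\operatorname{Re}(z_j)\}$ (in particular every real $p_i+\delta$ is positive). Applying Karamata's inequality (Proposition~\ref{prop:Karamata}) to $w\succ_w v$ with this $g$, and evaluating the two sums via \eqref{eqn:w,v}, yields
$$
\sum_{i=1}^{n_r}(p_i+\delta)^k \;\geq\; \sum_{i=n_r+1}^{n}|p_i+\delta|^k + \sum_{j=1}^m|z_j+\delta|^k .
$$
Combining this with the elementary bounds $\operatorname{Re}(\zeta^k)\geq-|\zeta|^k$ applied to the complex poles $\zeta=p_i+\delta$ and $\operatorname{Re}(\zeta^k)\leq|\zeta|^k$ applied to the zeros $\zeta=z_j+\delta$, and once more using conjugate symmetry to make the relevant sums real, gives $\sum_{i=1}^n(p_i+\delta)^k\geq\sum_{j=1}^m(z_j+\delta)^k$ for all $k\geq\mu$. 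This completes the coefficientwise verification, and Lemma~\ref{lem:N&S_sumexp} then concludes that $H$ is LCM.

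I expect the main difficulty to be purely bookkeeping: keeping the three index blocks (real poles, complex-conjugate poles, zeros) and the definitions \eqref{eqn:w,v} straight, and confirming that the $\cos(k\theta_i)$ and $\cos(k\phi_i)$ factors in \eqref{eqn:psum(p)>psum(z),1_mu_proper} are precisely the real parts of the corresponding complex powers. The one genuinely new ingredient beyond the proof of Theorem~\ref{thm:p^mu>z^mu} is the pair of bounds $-|\zeta|^k\leq\operatorname{Re}(\zeta^k)\leq|\zeta|^k$; these let a majorization statement about the \emph{moduli} of the shifted poles and zeros control the \emph{real} exponential sums of Lemma~\ref{lem:N&S_sumexp}, with everything else a direct adaptation of the real case.
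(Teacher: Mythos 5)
Your proposal is correct and follows essentially the same route as the paper's own proof: reduce via Lemma~\ref{lem:N&S_sumexp} to the term-by-term coefficient inequalities $\sum_i(p_i+\delta)^k\geq\sum_j(z_j+\delta)^k$, identify the cases $k\leq\mu-1$ with \eqref{eqn:psum(p)>psum(z),1_mu_proper} through \eqref{eqn:w,v}, and handle $k\geq\mu$ by Karamata's inequality applied to $w\succ_w v$ with $g(x)=x^{k/\mu}$. Your ``new ingredient'' $-\vert\zeta\vert^k\leq\operatorname{Re}(\zeta^k)\leq\vert\zeta\vert^k$ is exactly what the paper uses in \eqref{eqn:corproof_step1} and in its final chain $\sum_i v_i^{k/\mu}=\sum_i\vert z_i+\delta\vert^k\geq\sum_i(z_i+\delta)^k$, so there is no substantive difference.
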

\begin{proof}
We start by expanding the condition (\ref{eqn:exp(pt)>exp(zt)}) as
\begin{equation}\label{eqn:expanded_exp(pt)>exp(zt)}
    \sum_{k=0}^{+\infty}\sum_{i=1}^n (p_i+\delta)^kt^k/k!\geq \sum_{k=0}^{+\infty}\sum_{i=1}^m (z_i+\delta)^kt^k/k!
\end{equation}
where $t\in[0,\infty)$. According to Lemma~\ref{lem:N&S_sumexp}, logarithmic complete monotonicity can be assured by requiring the inequality (\ref{eqn:expanded_exp(pt)>exp(zt)}) to hold term-by-term for all corresponding powers of $k\in\mathbb{N}_0$, \emph{i.e.}
\begin{equation}\label{eqn:expanded_exp(pt)>exp(zt)_2}
    \sum_{i=1}^n (p_i+\delta)^k\geq \sum_{i=1}^m (z_i+\delta)^k
\end{equation}
This is equivalent to (\ref{eqn:psum(p)>psum(z),1_mu_proper}) noting the variable change (\ref{eqn:w,v}) and therefore, it holds for $k\leq \mu-1$ by assumption and it holds for $k=0$ because $n\geq m$. In order to show that (\ref{eqn:expanded_exp(pt)>exp(zt)_2}) also holds for $k\geq \mu$, we first write
\begin{equation}\label{eqn:corproof_step1}
\sum_{i=1}^n (p_i+\delta)^k \geq \sum_{i=1}^{n} w_i^{k/\mu}-\sum_{i=1}^{n} v_i^{k/\mu}
\end{equation}
Since $w \succ_w v$, it is deduced by Proposition~\ref{prop:Karamata} that
\begin{equation}\label{eqn:corproof_step2}
\sum_{i=1}^{n} w_i^{k/\mu}-\sum_{i=1}^{n} v_i^{k/\mu}  \geq
\sum_{i=n+1}^{n+m} v_i^{k/\mu}
\end{equation}
as the function $g(x)=x^{k/\mu}$ is convex and increasing on $x\in[0,+\infty)$ when $k\geq \mu$. Together, inequalities (\ref{eqn:corproof_step1}) and (\ref{eqn:corproof_step2}) imply that 
\begin{align*}
\sum_{i=1}^n (p_i+\delta)^k &\geq \sum_{i=n+1}^{n+m} v_i^{k/\mu} \\
                            &=\sum_{i=1}^{m} \vert z_i+\delta\vert^{k} \geq \sum_{i=1}^{m} (z_i+\delta)^{k}
\end{align*}
Thereby we have shown that condition (\ref{eqn:expanded_exp(pt)>exp(zt)_2}) holds for all $k\in\mathbb{N}_0$ and hence, that $H(s)$ is LCM.
\end{proof}

Note that Corollary~\ref{cor:complex} recovers Theorem~\ref{thm:p^mu>z^mu} when $m=n$ and $p_i,z_i\in \mathbb{R}$ for all $i=1,2,\cdots, n$.

\section{Applications in control theory}\label{sec:apps}
The sufficient condition for logarithimic complete monotonicity that we have derived in Corollary~\ref{cor:complex} also ensures that a given transfer function is externally positive. 
In this section, we exploit this fact to characterize {externally positive} systems and propose a controller design procedure which guarantees that the closed-loop system has a monotonic step response. 

\subsection{Characterization of {externally positive} systems}\label{subsec:Characterization_of_externally_positive_systems}
A linear time-invariant system is externally positive if and only if its transfer function is externally positive. 
Finding a complete characterization of the pole-zero patterns for which (\ref{eqn:transfer_function}) is externally positive is still an open problem~\cite{ScL2020}. However many sufficient conditions have been derived in the literature and for lower-order systems, most of the externally positive transfer functions are exposed using a combination of these results. For the sake of convenience, we provide conditions that characterize the full family of first- and second-order externally positive functions. Since these functions are a superset of the LCM functions, these conditions are necessary for a function to be LCM.

\begin{proposition}\label{prop:ex_pos_n=1}
Let $n=1$. The transfer function (\ref{eqn:transfer_function}) is externally positive if and only if $K>0$ and $B(p^{\downarrow}_1)\geq 0$.
\end{proposition}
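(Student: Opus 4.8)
The plan is to reduce (\ref{eqn:transfer_function}) to partial-fraction form and read off the impulse response directly. For $n=1$ the transfer function is first-order and real-rational, so it has a single real pole $p_1=p_1^{\downarrow}$ and $m\in\{0,1\}$; its leading numerator coefficient is $b_0=0$ when $m=0$ and $b_0=K$ when $m=1$. Euclidean division of $B(s)$ by $A(s)=s-p_1$ together with the remainder theorem gives $B(s)=b_0(s-p_1)+B(p_1)$, hence
$$
H(s)=b_0+\frac{B(p_1)}{s-p_1},\qquad
h(t)=\mathfrak{L}^{-1}\{H(s)\}=b_0\,\delta(t)+B(p_1)\,e^{p_1 t},
$$
which for $m=0$ reduces to $h(t)=Ke^{p_1t}$ since then $B(p_1)=K$.

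For the ``if'' direction I would assume $K>0$ and $B(p_1^{\downarrow})=B(p_1)\ge 0$: then the Dirac weight $b_0\in\{0,K\}$ is non-negative and $B(p_1)e^{p_1t}\ge 0$ because $e^{p_1t}>0$, so $h(t)\ge 0$ on $[0,+\infty)$ and $H$ is externally positive by Definition~\ref{def:EP}. For the ``only if'' direction I would assume $h(t)\ge 0$ on $[0,+\infty)$. Restricting to $t>0$, where $h$ equals the ordinary function $B(p_1)e^{p_1t}$, forces $B(p_1)\ge 0$. To pin down the gain, I would invoke the Bernstein-type equivalence recalled after Definition~\ref{def:EP}: external positivity makes $H$ completely monotonic on $(p_1,+\infty)$, so in particular $H(s)\ge 0$ there; letting $s\to+\infty$ gives $b_0=\lim_{s\to+\infty}H(s)\ge 0$, i.e. $K\ge 0$ when $m=1$, while for $m=0$ one has $H(s)=K/(s-p_1)\ge 0$ on $(p_1,+\infty)$ directly, so again $K\ge 0$. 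Since $K\neq 0$ by (\ref{eqn:transfer_function}), this upgrades to $K>0$.

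The only delicate point is the proper case $m=1$, where $h$ is a distribution rather than a function and one must argue that a non-negative impulse response cannot carry a strictly negative atom at $t=0$; routing the sign of $b_0=K$ through the complete-monotonicity characterization (rather than directly through the Dirac term) sidesteps that technicality. It is also worth emphasising why both hypotheses appear in the statement: when $m=1$, taking $K<0$ with $z_1$ such that $p_1-z_1<0$ makes $B(p_1)>0$ even though the system is not externally positive, so $B(p_1^{\downarrow})\ge 0$ alone is insufficient and the condition $K>0$ is genuinely needed. Everything beyond this setup is a one-line computation.
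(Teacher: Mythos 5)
Your proposal is correct, but it takes a different route from the paper. The paper's proof is essentially a reduction to a cited result: it observes that $B(p^{\downarrow}_1)\geq 0$ reads $K\geq 0$ when $m=0$ and $K(p^{\downarrow}_1-z_1)\geq 0$ when $m=1$, and then appeals to the known characterization of first-order externally positive systems in the cited reference, which settles both directions in two lines. You instead give a self-contained argument: Euclidean division $B(s)=b_0(s-p_1)+B(p_1)$ yields $h(t)=b_0\delta(t)+B(p_1)e^{p_1t}$, from which the ``if'' direction is immediate; for ``only if'' you extract $B(p_1)\geq 0$ from $t>0$ and then route the sign of $K$ through the Bernstein/complete-monotonicity equivalence stated after Definition~\ref{def:EP} (via $H(s)\geq 0$ on $(p_1,+\infty)$ and $s\to+\infty$), which is a clean way to avoid arguing pointwise about the Dirac atom in the biproper case. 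Your approach buys transparency, an explicit treatment of both implications, and a careful handling of the distributional subtlety the paper never mentions; the paper's approach buys brevity at the cost of leaning on an external reference. One cosmetic simplification: for $m=0$ you do not need complete monotonicity at all, since $B(p_1)=K$ together with $K\neq 0$ already forces $K>0$ directly from the condition you just derived.
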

\begin{proof}
{When $m=0$, the condition $B(p^{\downarrow}_1)\geq 0$ is already satisfied as $B(p^{\downarrow}_1)=K$ and when $m=1$, it can be written as $K(p^{\downarrow}_1-z_1)\geq 0$. In either case, the system is externally positive, according to \cite{lunzpa}.}
\end{proof}

\begin{proposition}\label{prop:ex_pos_n=2}
Let $n=2$ with no zero-pole cancellations. The transfer function (\ref{eqn:transfer_function}) is externally positive if and only if $p_1,p_2\in\mathbb{R}$, $K>0$, $B(p^{\downarrow}_1)\geq 0$, $B'(p^{\downarrow}_1)\geq 0$ and $B(p^{\downarrow}_1)\geq B(p^{\downarrow}_2)$ where $B'(.)$ is the numerator derivative.
\end{proposition}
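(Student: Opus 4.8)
The plan is to characterize second-order external positivity directly by computing the impulse response $h(t) = \mathfrak{L}^{-1}\{H(s)\}$ and determining exactly when it stays non-negative on $[0,\infty)$. First I would observe that if the poles $p_1, p_2$ are a genuine complex-conjugate pair, then $h(t)$ contains a factor like $\sin(\omega t + \varphi)$ with $\omega > 0$, which necessarily changes sign; a short argument (examining the oscillatory envelope, or noting that $H$ cannot then be CM since repeated differentiation of $\log H$ fails the sign test, consistent with Lemma~\ref{lem:N&S_sumexp} and the real-part arguments in Proposition~\ref{prop:necessary_conditions}) rules this case out and forces $p_1, p_2 \in \mathbb{R}$. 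So from here on I assume $p^{\downarrow}_1 \geq p^{\downarrow}_2$ are real and distinct (distinctness up to the repeated-pole limiting case, which I would treat by continuity).

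Next, with $B(s) = b_0 s^2 + b_1 s + b_2$ (where $b_0 = K$ if $m=2$, and lower-degree otherwise), a partial-fraction expansion gives
\begin{equation}\label{eqn:h_t_form}
h(t) = c_1 e^{p^{\downarrow}_1 t} + c_2 e^{p^{\downarrow}_2 t}, \qquad c_j = \frac{B(p^{\downarrow}_j)}{\prod_{i\neq j}(p^{\downarrow}_j - p^{\downarrow}_i)}.
\end{equation}
Since $p^{\downarrow}_1 - p^{\downarrow}_2 > 0$, we have $c_1 = B(p^{\downarrow}_1)/(p^{\downarrow}_1 - p^{\downarrow}_2)$ and $c_2 = -B(p^{\downarrow}_2)/(p^{\downarrow}_1 - p^{\downarrow}_2)$. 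A sum of two real exponentials with distinct rates $p^{\downarrow}_1 > p^{\downarrow}_2$ is non-negative on $[0,\infty)$ if and only if (i) the leading coefficient $c_1 > 0$, or $c_1 = 0$ and $c_2 \geq 0$; and (ii) $h(0) = c_1 + c_2 \geq 0$. The key elementary fact is that $h(t) e^{-p^{\downarrow}_2 t} = c_1 e^{(p^{\downarrow}_1 - p^{\downarrow}_2)t} + c_2$ is monotincreasing in $t$ when $c_1 > 0$, so its minimum over $[0,\infty)$ is at $t=0$; hence non-negativity everywhere reduces to non-negativity at $t=0$ plus the dominant-mode sign condition. I would then translate these into the stated inequalities: $c_1 \geq 0 \iff B(p^{\downarrow}_1) \geq 0$; the value condition $h(0) = c_1 + c_2 \geq 0$ becomes $B(p^{\downarrow}_1) \geq B(p^{\downarrow}_2)$ after clearing the positive denominator; and I'd show the slope-at-origin condition $\dot h(0) = p^{\downarrow}_1 c_1 + p^{\downarrow}_2 c_2 \geq 0$ is equivalent to $B'(p^{\downarrow}_1) \geq 0$ by using $B'(p^{\downarrow}_1) = b_0(p^{\downarrow}_1 - p^{\downarrow}_2) + (\text{terms}) $ — more directly, note $B(s) = B(p^{\downarrow}_1) + B'(p^{\downarrow}_1)(s - p^{\downarrow}_1) + b_0 (s-p^{\downarrow}_1)(s-p^{\downarrow}_2)\cdot(\text{const})$, evaluate at $s = p^{\downarrow}_2$, and combine. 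Handling the edge cases $c_1 = 0$ (i.e. $B(p^{\downarrow}_1) = 0$, where $h$ degenerates to a single exponential and one checks $c_2 = -B(p^{\downarrow}_2)/(p^{\downarrow}_1-p^{\downarrow}_2) \geq 0$, which follows from $B(p^{\downarrow}_1) \geq B(p^{\downarrow}_2)$) is where I would be most careful to confirm the three listed conditions are exactly right.

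For the necessity direction I run the same computation backwards: if $h \geq 0$ then $c_1 \geq 0$ (else $h \to -\infty$), $h(0) \geq 0$, and $\dot h(0) \geq 0$ whenever $h(0) = 0$ — but in fact $B'(p^{\downarrow}_1) \geq 0$ should hold unconditionally once $B(p^{\downarrow}_1) \geq 0$ and $h \geq 0$, which I'd verify by relating $B'(p^{\downarrow}_1)$ to the behaviour of $h(t)e^{-p^{\downarrow}_2 t}$ near $t = 0$ (its derivative at $0$ is $c_1(p^{\downarrow}_1 - p^{\downarrow}_2) = B'(\cdot)$-related). The main obstacle I anticipate is the bookkeeping that converts the three analytic conditions on $(c_1, c_2)$ into the three polynomial conditions on $B$ evaluated at the sorted poles, and making sure the "no zero-pole cancellation" hypothesis is used correctly so that $B(p^{\downarrow}_j) \neq 0$ generically and the degenerate sub-cases are consistent; the ruling-out of complex poles, while conceptually clear, also needs a clean one-line justification rather than hand-waving about oscillation.
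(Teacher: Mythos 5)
Your route is essentially the paper's: exclude complex-conjugate poles, do a partial-fraction expansion, divide out the dominant exponential, and observe that $c_1 e^{(p^{\downarrow}_1-p^{\downarrow}_2)t}+c_2$ is monotone so non-negativity reduces to the endpoints $t=0$ and $t\to+\infty$, which translates into $B(p^{\downarrow}_1)\geq 0$ and $B(p^{\downarrow}_1)\geq B(p^{\downarrow}_2)$. However, two points in your plan are genuine gaps rather than bookkeeping. First, your expression for $h(t)$ omits the impulsive term that appears when $n=m=2$: for a biproper $H$ the correct expansion is $h(t)=K\delta(t)+c_1\exp(p^{\downarrow}_1 t)+c_2\exp(p^{\downarrow}_2 t)$, and it is precisely this $K\delta(t)$ term that makes $K>0$ a \emph{necessary} condition in the biproper case (when $n>m$ it is instead implied by the other inequalities, as the paper notes). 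Without it, your argument cannot recover the necessity of $K>0$, and indeed there are biproper examples with $K<0$ whose exponential part satisfies your conditions (i)--(ii).

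Second, handling the repeated pole $p_1=p_2$ ``by continuity'' does not work for the necessity direction: knowing only that the confluent system has $h\geq 0$ does not exhibit it as a limit of externally positive distinct-pole systems, so you cannot transport the distinct-pole conditions to the limit, and in the confluent case the condition $B'(p^{\downarrow}_1)\geq 0$ is the \emph{active} constraint rather than an implied one. The paper treats this case directly: with a double pole, $h(t)=K\delta(t)\mathbf{1}_{[n=m]}+k_1\exp(p^{\downarrow}_1t)+k_2t\exp(p^{\downarrow}_1t)$ with $k_1=B'(p^{\downarrow}_1)$ and $k_2=B(p^{\downarrow}_1)$, and $k_1+k_2t\geq 0$ for all $t>0$ iff both residues are non-negative; this short computation is what you need in place of the continuity argument (even the sufficiency direction via perturbation requires care when $B'(p^{\downarrow}_1)=0$). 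Your exclusion of complex poles and the claim that $B'(p^{\downarrow}_1)\geq 0$ is implied in the distinct-pole case are both fine and match the paper (the latter because, once $K>0$, $B(p^{\downarrow}_1)\geq B(p^{\downarrow}_2)$ forces $p^{\downarrow}_1$ to lie on or to the right of the vertex of the parabola $B$).
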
 
\begin{proof}
According to \cite{Dar2003}, $H(s)$ is not externally positive if it has complex-conjugate poles. Therefore, we assume $p_1,p_2\in\mathbb{R}$ and split the proof in two cases:

\noindent\textit{Case 1:} $p_1\neq p_2$. Following the partial fraction expansion and inverse Laplace transform, one has
    \begin{equation}\label{eqn:h_PFE_p1!=p2}
        h(t)=K\delta(t) \mathbf{1}_{[n=m]}+k_1\exp(p^{\downarrow}_1t)+k_2\exp(p^{\downarrow}_2t)
    \end{equation}
where $\mathbf{1}$ is the indicator function. Assuming $t>0$ in (\ref{eqn:h_PFE_p1!=p2}) and dividing both sides by $\exp(p^{\downarrow}_1t)$ indicates that $h(t)\geq 0$ is met if and only if
\begin{equation}\label{eqn(2):h_PFE_p1!=p2}
k_1+k_2\exp((p^{\downarrow}_2-p^{\downarrow}_1)t)\geq 0    
\end{equation}
The supremum and infimum of the left hand side of (\ref{eqn(2):h_PFE_p1!=p2}) are achieved at $t=0$ and $t\to +\infty$. Hence, (\ref{eqn(2):h_PFE_p1!=p2}) is satisfied for all $t>0$ if and only if it is satisfied for $t=0$ and $t\to +\infty$. This is equivalent to requiring that $k_1\geq0$ and $k_1+k_2\geq0$ hold true, where the residues $k_1$ and $k_2$ can be written as
\begin{equation}\label{eqn:residues}
    k_1=B(p^{\downarrow}_1)/(p^{\downarrow}_1-p^{\downarrow}_2),\quad k_2=B(p^{\downarrow}_2)/(p^{\downarrow}_2-p^{\downarrow}_1)
\end{equation}
This gives the conditions $B(p^{\downarrow}_1)\geq 0$ and $B(p^{\downarrow}_1)\geq B(p^{\downarrow}_2)$.

In case $n=m$, condition $K>0$ is also required for (\ref{eqn:transfer_function}) to be externally positive according to (\ref{eqn:h_PFE_p1!=p2}) and condition $B'(p^{\downarrow}_1)\geq 0$ is automatically satisfied when $B(p^{\downarrow}_1)\geq B(p^{\downarrow}_2)$. In case $n>m$, both $K>0$ and $B'(p^{\downarrow}_1)\geq 0$ are implied from $B(p^{\downarrow}_1)\geq 0$ and $B(p^{\downarrow}_1)\geq B(p^{\downarrow}_2)$.

\noindent\textit{Case 2:} $p_1= p_2$. The proof is similar to the previous case. Following the partial fraction expansion and inverse Laplace transform, one has
    \begin{equation}\label{eqn:h_PFE_p1=p2}
        h(t)=K\delta(t) \mathbf{1}_{[n=m]}+k_1\exp(p^{\downarrow}_1t)+k_2 t \exp(p^{\downarrow}_2t)
    \end{equation}
Assuming $t>0$ in (\ref{eqn:h_PFE_p1=p2}) and dividing both sides by $\exp(p^{\downarrow}_1t)=\exp(p^{\downarrow}_2t)$ indicates that $h(t)\geq 0$ is equivalent to $k_1+k_2t\geq 0$, which is satisfied for all $t>0$, if and only if both $k_1$ and $k_2$ are non-negative. This gives the conditions $k_1=B'(p^{\downarrow}_1)\geq 0$ and $k_2=B(p^{\downarrow}_1)\geq 0$. Condition $K>0$ is also required for (\ref{eqn:transfer_function}) to be externally positive according to (\ref{eqn:h_PFE_p1=p2}) when $n=m$. Otherwise in case $n>m$, $K>0$ is implied from $B(p^{\downarrow}_1),B'(p^{\downarrow}_1)\geq 0$.
\end{proof}

Characterization of higher-order {externally positive} systems ($n\geq 3$) is complicated and no general conditions exist that do not impose additional assumptions~\cite{JGC2001}. Corollary~\ref{cor:complex} provides sufficient conditions for a transfer function to be LCM. Since all LCM functions are externally positive, these conditions characterize a family of {externally positive} systems of arbitrary order with a simple set of conditions. But how conservative is such an approach? We try to answer this question in the following {Proposition.}

{\begin{proposition}\label{prop:LCM=EP}
All LCM transfer functions are externally positive. All externally positive transfer functions with $n\leq 2$ are LCM, but there are externally positive transfer functions with $n\geq 3$ that are not LCM.
\end{proposition}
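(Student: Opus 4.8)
The statement bundles three claims, so I would organise the argument around them. The first --- every LCM transfer function is externally positive --- is essentially already recorded in the preliminaries: an LCM function is CM, and a CM function on a half-line $(r_0,+\infty)$ with $r_0\geq\sigma(H)$ is externally positive by the Bernstein theorem. I would dispose of it in a line or two, citing the relevant facts, and spend the rest of the proof on the other two claims.

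For the second claim --- externally positive $\Rightarrow$ LCM when $n\leq 2$ --- the plan is to feed the explicit characterisations of first- and second-order externally positive systems (Propositions~\ref{prop:ex_pos_n=1} and~\ref{prop:ex_pos_n=2}) into the necessary and sufficient test of Lemma~\ref{lem:N&S_sumexp}, i.e.\ to show that the constraints on $K$, the poles and the zeros imposed by external positivity force $\sum_{i=1}^n e^{p_i t}\geq\sum_{j=1}^m e^{z_j t}$ for every $t\geq 0$. After reducing to a coprime representation, I would proceed case by case on $n\in\{0,1,2\}$ and $m\leq n$. The degenerate cases $n\leq 1$ and the cases $m\leq 1$ are handled by the crude estimate $\sum_i e^{p_i t}\geq e^{p_1^{\downarrow}t}\geq\sum_{j=1}^m e^{z_j t}$, valid once one notes that external positivity forces the poles to be real and, when $m=1$, forces $p_1^{\downarrow}\geq z_1$ via $B(p_1^{\downarrow})\geq 0$. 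The genuinely non-trivial case is $n=m=2$, which I would split according to whether the two zeros are real or a complex-conjugate pair. If they are real, I would show that $B(p_1^{\downarrow})\geq 0$, $B'(p_1^{\downarrow})\geq 0$ and $B(p_1^{\downarrow})\geq B(p_2^{\downarrow})$ together imply $p\succ_w z$, and then conclude with Karamata's inequality (Proposition~\ref{prop:Karamata}) applied to the convex increasing map $x\mapsto e^{xt}$. If the zeros are $\alpha\pm i\beta$, the same conditions yield $p_1^{\downarrow}+p_2^{\downarrow}\geq 2\alpha$, whence by the arithmetic--geometric mean inequality $e^{p_1 t}+e^{p_2 t}\geq 2e^{(p_1+p_2)t/2}\geq 2e^{\alpha t}\geq 2e^{\alpha t}\cos(\beta t)=e^{z_1 t}+e^{z_2 t}$.

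For the third claim I would simply exhibit a counterexample. Take $H(s)=\omega^2/\bigl[(s+\sigma)\bigl((s+\sigma)^2+\omega^2\bigr)\bigr]$ with $\sigma,\omega>0$ (for instance $\sigma=\omega=1$, i.e.\ $H(s)=1/(s^3+3s^2+4s+2)$), so $n=3$. Its inverse Laplace transform is $h(t)=(1-\cos(\omega t))e^{-\sigma t}\geq 0$, so $H$ is externally positive; but its poles are $-\sigma$ and $-\sigma\pm i\omega$, so $\sum_{i=1}^3 e^{p_i t}=(1+2\cos(\omega t))e^{-\sigma t}$, which is strictly negative at $t=\pi/\omega$. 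This violates~\eqref{eqn:exp(pt)>exp(zt)} (whose right-hand side is empty, hence zero), so $H$ is not LCM by Lemma~\ref{lem:N&S_sumexp}.

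The step I expect to be the main obstacle is the real-zero subcase of $n=m=2$: turning the residue/coefficient inequalities characterising external positivity into the majorization relation $p\succ_w z$ takes a little care --- one deduces $p_1^{\downarrow}\geq z_1^{\downarrow}$ from the combination of $B(p_1^{\downarrow})\geq 0$ and $B'(p_1^{\downarrow})\geq 0$ (product and sum of $(p_1^{\downarrow}-z_1)$ and $(p_1^{\downarrow}-z_2)$ are both nonnegative), and $p_1^{\downarrow}+p_2^{\downarrow}\geq z_1+z_2$ by factoring $B(p_1^{\downarrow})-B(p_2^{\downarrow})=K(p_1^{\downarrow}-p_2^{\downarrow})\bigl[(p_1^{\downarrow}+p_2^{\downarrow})-(z_1+z_2)\bigr]$, with the coincident-pole situation $p_1^{\downarrow}=p_2^{\downarrow}$ treated separately. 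One also has to remember to clear pole--zero cancellations first. Everything else is either a citation or a one-line estimate.
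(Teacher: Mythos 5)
Your proposal is correct, and for the first two claims it follows essentially the paper's own route: LCM~$\Rightarrow$~CM~$\Rightarrow$~externally positive via Bernstein, and for $n\leq 2$ feeding the characterisations of Propositions~\ref{prop:ex_pos_n=1}--\ref{prop:ex_pos_n=2} into Lemma~\ref{lem:N&S_sumexp}, splitting $n=m=2$ into real zeros (reduce to $p\succ_w z$ and apply Proposition~\ref{prop:Karamata} with $g(x)=e^{xt}$) and complex-conjugate zeros (the chain $e^{z_1t}+e^{z_2t}=2e^{\operatorname{Re}(z_1)t}\cos(\operatorname{Im}(z_1)t)\leq 2e^{(p_1+p_2)t/2}\leq e^{p_1t}+e^{p_2t}$, which is exactly the paper's Jensen step). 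You actually supply more detail than the paper on how $B(p_1^{\downarrow})\geq 0$, $B'(p_1^{\downarrow})\geq 0$ and $B(p_1^{\downarrow})\geq B(p_2^{\downarrow})$ yield the majorization, including the coincident-pole case (where $B'(p_1^{\downarrow})\geq 0$ gives $2p_1^{\downarrow}\geq z_1+z_2$ directly), which the paper compresses into the unproved reformulation \eqref{eqn:expos_n=m=2}. The genuine difference is in the third claim: the paper exhibits a third-order system with complex-conjugate zeros to the right of the dominant pole, cites an external reference for its external positivity, and rules out LCM via necessary condition (b) of Proposition~\ref{prop:necessary_conditions}; you instead take $H(s)=\omega^2/[(s+\sigma)((s+\sigma)^2+\omega^2)]$, whose impulse response $(1-\cos\omega t)e^{-\sigma t}\geq 0$ is verified by hand, and whose complex poles make the left side of \eqref{eqn:exp(pt)>exp(zt)}, namely $(1+2\cos\omega t)e^{-\sigma t}$, negative at $t=\pi/\omega$, violating Lemma~\ref{lem:N&S_sumexp} directly. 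Your counterexample is self-contained (no citation needed for external positivity) and shows additionally that complex \emph{poles} alone can obstruct the LCM property, whereas the paper's example makes the slightly different point that zeros right of the dominant pole do; both are valid witnesses for the claim.
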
}
\begin{proof}To prove the equivalence of externally positive and LCM transfer functions for $n\leq 2$, we show that when $H(s)$ in (\ref{eqn:transfer_function}) satisfies the hypothesis of Proposition~\ref{prop:ex_pos_n=1} or Proposition~\ref{prop:ex_pos_n=2}, it also satisfies that of Lemma~\ref{lem:N&S_sumexp}. This is straight-forward for $n=1$ and for $m<n=2$. For $n=m=2$ the conditions in Proposition~\ref{prop:ex_pos_n=2} can be written as
\begin{equation}\label{eqn:expos_n=m=2}
\left\lbrace\begin{array}{l}
    p_1+p_2 \geq z_1+z_2 \\
    (p^{\downarrow}_1-z_1)(p^{\downarrow}_1-z_2)\geq 0
\end{array}\right.
\end{equation}
we split the rest of the proof in two cases:

\noindent\textit{Case 1:} $z_1,z_2\in\mathbb{R}$. 
In this case, condition (\ref{eqn:expos_n=m=2}) is equivalent to $p\succ_w z$. Thereby, using Proposition~\ref{prop:Karamata} with $g(x)=\exp(xt)$ results in (\ref{eqn:exp(pt)>exp(zt)}) and therefore, when function $H(s)$ is externally positive, it is also LCM.

\noindent\textit{Case 2:} $z_1=\bar{z}_2\in\mathbb{C}\backslash \mathbb{R}$. 
For the right hand side of (\ref{eqn:exp(pt)>exp(zt)}) one has
\begin{align}
    \exp(z_1t)+\exp(z_2t)&=2\exp(\operatorname{Re}(z_1)t)\cos(\operatorname{Im}(z_1)t)\nonumber\\
    &\leq 2\exp(\operatorname{Re}(z_1)t) \nonumber\\
    &\leq 2\exp((p_1+p_2)t/2) \label{AAA}\\
    &\leq \exp(p_1t)+\exp(p_2t) \label{BBB}
\end{align}
where in the steps (\ref{AAA}) and (\ref{BBB}) we have used (\ref{eqn:expos_n=m=2}) and the Jensen inequality. This proves (\ref{eqn:exp(pt)>exp(zt)}) and therefore, we conclude that when $n\leq 2$, the sets of externally positive functions and LCM functions coincide.

The second fact for higher-order systems with $n>2$ can be shown by noting that there exists transfer functions with complex-conjugate zeros on the right side of the dominant pole which are externally positive but {not LCM}. One such example is the third-order {transfer function
$$ H(s)=(s+0.5-i)(s+0.5+i)/(s+0.8)(s+1)(s+1.2) $$
which is externally positive according to \cite{LiF1997} but is not LCM, since condition (b) in Proposition~\ref{prop:necessary_conditions} asserts that LCM functions cannot have zeros with real parts greater than their dominant pole(s).}
\end{proof}

Given this one-sided relation between {LCM and externally positive} functions, we can only hope for the sufficient conditions provided in Theorem~\ref{thm:p^mu>z^mu} and Corollary~\ref{cor:complex} to expose a proper subset of {externally positive} systems. 

This subset can, however, be very sharp. {For example, Corollary~\ref{cor:complex} with $\mu=1$ exposes all the first and second-order externally positive systems with real zeros and poles. To see this,} it is enough to show that if a first or second-order function with real zeros and poles is LCM, it also satisfies the conditions in Corollary~\ref{cor:complex}, {according to Proposition~\ref{prop:LCM=EP}}. Here, we show this in the case $n=m=2$ and the other cases can be proved in a similar way.
Firstly, assuming $t\to +\infty$ yields $\sum_{i=1}^2\exp(p_it)\sim \exp(p^{\downarrow}_1 t)$ and $\sum_{i=1}^2\exp(z_it)\sim \exp(z^{\downarrow}_1 t)$, using which in (\ref{eqn:exp(pt)>exp(zt)}) implies
\begin{equation}\label{eqn:pmax>zmax}
    p^{\downarrow}_1\geq z^{\downarrow}_1
\end{equation}
In addition, when function (\ref{eqn:transfer_function}) is LCM, condition (c) in Proposition~\ref{prop:necessary_conditions} also holds true. This together with (\ref{eqn:pmax>zmax}) gives $w\succ_w v$ with $\mu=1$ and any $\delta$, where $w$ and $v$ are defined in (\ref{eqn:w,v}). Therefore, the conditions in Corollary~\ref{cor:complex} are also satisfied.

For higher-order systems ($n\geq 3$), {although Corollary~\ref{cor:complex} may not expose all the externally positive systems, one can} increase $\mu$ to get a tight characterization of the parameters that result in a externally positive system, as shown by the following example.

\begin{example}\label{ex:mu_effect}
Consider the transfer function $$H(s)=\frac{(s+10)(s+15)(s+30)}{(s+5)(s-p_2)(s-p_3)}$$
We are interested in the values of $p_2$ and $p_3$ such that $H(s)$ is externally positive.
{First consider the case in which $p_2$ and $p_3$ are real. Without loss of generality, we assume
$$
-35 \leq p^{\downarrow}_3 \leq p^{\downarrow}_2 \leq p^{\downarrow}_1=-5
$$
and use Theorem~\ref{thm:p^mu>z^mu} with $\delta=35$.}
Choosing $\mu=1$ in Theorem~\ref{thm:p^mu>z^mu} gives the same conditions as in \cite{Bal1994} which results in the yellow region shown in Figure~\ref{fig:mu_effect_a}. Choosing $\mu=2$, however, expands the region to coincide with the exact region associated with {non-negativeness of the impulse response}, revealing all the possible poles that make $H(s)$ externally positive in the real poles case. {This is evident in Figure~\ref{fig:mu_effect_a} as the region corresponding to $\mu=2$ (red) is equivalent to all the pairs $(p^{\downarrow}_2,p^{\downarrow}_3)$ satisfying condition (\ref{eqn:necexpos_sumpsumz}) which is known to be necessary for $H(s)$ to be externally positive.} Figure~\ref{fig:mu_effect_a} also features the sufficient conditions {for a system to be externally positive} offered by \cite{LiB2008} and \cite{lunzpa} in blue and gray colors respectively. Assuming the poles $p_2$ and $p_3$ to be complex-conjugates, we can use Corollary~\ref{cor:complex} to reveal the regions where system $H(s)$ is externally positive. Note that the conditions in \cite{LiB2008,lunzpa,Bal1994} are not applicable to complex-conjugate poles.
\end{example}

\begin{figure}
     \centering
     \begin{subfigure}[b]{0.49\textwidth}
        \centering
        \includegraphics[width=0.75\linewidth]{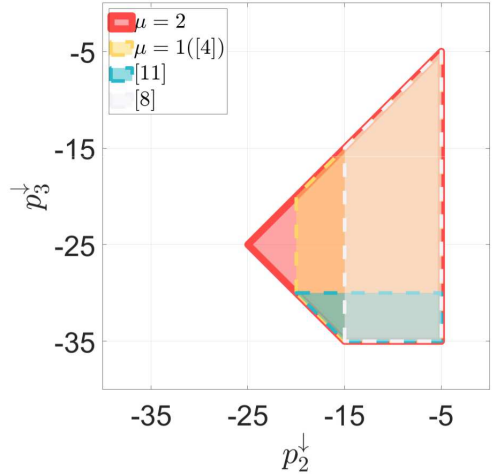}
	    \caption{Real poles.}
         \label{fig:mu_effect_a}
     \end{subfigure}
     \hfill
     \begin{subfigure}[b]{0.49\textwidth}
         \centering
        \includegraphics[width=0.75\linewidth]{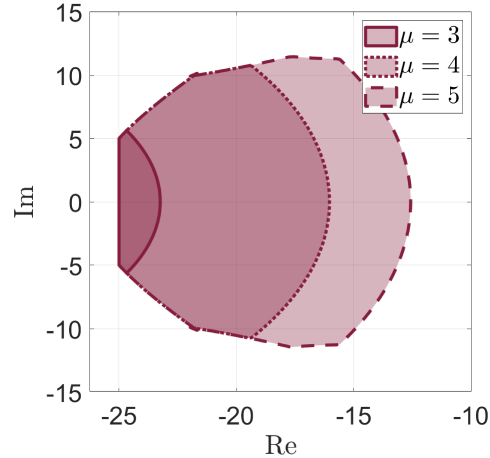}
	    \caption{Complex-conjugate poles.}
         \label{fig:mu_effect_b}
     \end{subfigure}
        \caption{{externally positive} regions in Example~\ref{ex:mu_effect}.}
\end{figure}


\subsection{Controller synthesis for monotonic tracking}\label{sec:control}
In Subsection~\ref{subsec:Characterization_of_externally_positive_systems}, it was shown that Corollary~\ref{cor:complex} (and its special case Theorem~\ref{thm:p^mu>z^mu}) is able to define a relatively sharp inner approximation of the set of {externally positive} systems represented by their zeros and poles. We will now show how these results can be used to develop an output-feedback synthesis procedure that 
guarantees monotonic tracking of reference changes with no steady-state error.

{In order to also optimize the closed-loop system performance, it is preferable to have a convex characterization of LCM functions, so that the optimal controller synthesis problem can be solved using reliable numerical routines.}
%
While the conditions in Theorem~\ref{thm:p^mu>z^mu} are not generally convex in $z,p$ (see Figure~\ref{fig:5th_order}), they become so by means of a variable change, {as shown in the following proposition.}

\begin{proposition}\label{prop:convex_real}
{Let $\delta$ be a fixed number. The set of vectors $\pi=(p+\delta)^{\mu\downarrow}\in(0,+\infty)^n$ and $z\in(-\delta,\infty)^n$ that satisfy the hypothesis of Theorem~\ref{thm:p^mu>z^mu} is a convex set.}
\end{proposition}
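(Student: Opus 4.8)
The plan is to express the defining constraints of Theorem~\ref{thm:p^mu>z^mu} directly in the decision variables $\pi = (p+\delta)^{\mu\downarrow}$ and $\zeta_i := (z_i+\delta)^{\mu}$ and to check that each one is either a linear (in)equality or a convex inequality in these variables, so that the feasible set is an intersection of convex sets and hence convex. Observe first that once $\delta$ is fixed, the map $z \mapsto \zeta = (z+\delta)^{\mu}$ is a fixed nonlinear re-coordinatization, but the convexity claim is about the set of pairs $(\pi, z)$, and I will argue that (after sorting) the constraint set is the preimage of a convex set under an affine map in a suitable lift, or more directly that it is itself described by convex inequalities.

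First I would rewrite the weak-majorization condition $(p+\delta)^{\mu} \succ_w (z+\delta)^{\mu}$. Since $\pi$ is by definition the descending sort of $(p+\delta)^{\mu}$, and weak majorization compares the ordered partial sums, the condition becomes
\begin{equation*}
\sum_{i=1}^{k} \pi_i \;\geq\; \sum_{i=1}^{k} \big((z+\delta)^{\mu}\big)^{\downarrow}_i, \qquad k = 1, \dots, n.
\end{equation*}
The left-hand side is linear in $\pi$. The right-hand side, $\sum_{i=1}^{k}\zeta^{\downarrow}_i$, equals $\max$ over all $k$-element index subsets $S$ of $\sum_{i\in S}\zeta_i = \sum_{i\in S}(z_i+\delta)^{\mu}$; it is therefore a pointwise maximum of finitely many functions, and each such function is convex in $z$ because $z_i \mapsto (z_i+\delta)^{\mu}$ is convex on $(-\delta,\infty)$ (here $\mu\in\mathbb{N}$, and $x\mapsto x^\mu$ is convex and increasing on $[0,\infty)$, while $z_i+\delta>0$). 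A maximum of convex functions is convex, so each majorization inequality has the form (linear in $\pi$) $\geq$ (convex in $z$), i.e. it carves out a convex set.

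Next I would handle the power-sum inequalities \eqref{eqn:psum(p)>psum(z),1_mu}. These read $\sum_{i=1}^n (p_i+\delta)^k \geq \sum_{i=1}^n (z_i+\delta)^k$ for $k=1,\dots,\mu-1$. The subtlety is that the left-hand side is written in $p$, not in $\pi$; but $\sum_i (p_i+\delta)^k = \sum_i (\pi_i)^{k/\mu}$ since raising to the $1/\mu$ power inverts the (sorted, positive) transformation, and $\sum_i \pi_i^{k/\mu}$ is a sum of concave functions of $\pi$ for $k<\mu$, hence concave in $\pi$. Meanwhile $\sum_i (z_i+\delta)^k$ is a sum of convex increasing functions of $z$ for $k\geq 1$, hence convex in $z$. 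Thus each of these constraints is (concave in $\pi$) $\geq$ (convex in $z$), which again defines a convex set. Finally, the domain restrictions $\pi\in(0,\infty)^n$ and $z\in(-\delta,\infty)^n$ are convex, and the sortedness $\pi_1\geq\pi_2\geq\cdots\geq\pi_n$ is a system of linear inequalities. Intersecting all of these convex sets gives a convex set, which is the claim.

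The main obstacle I anticipate is bookkeeping around the sorting operators: the feasible set is stated in terms of the \emph{sorted} vector $\pi=(p+\delta)^{\mu\downarrow}$, and on the right-hand side of the majorization condition one must also take a descending sort of $(z+\delta)^\mu$, so I need the fact that $x \mapsto \sum_{i=1}^k x^{\downarrow}_i$ is a convex (Schur-convex, and in fact a sublinear) function — equivalently that partial sums of order statistics are convex. I would either cite this directly from \cite{MOA2010} or give the one-line argument via the max-over-subsets representation above. A second, minor point to verify is that composing "take $\mu$-th power" (convex increasing on the relevant domain) with "take the top-$k$ partial sum" preserves convexity in $z$; this follows from the standard rule that a convex nondecreasing function of a convex function is convex, applied coordinatewise before summing.
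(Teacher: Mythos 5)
Your proposal is correct and follows essentially the same route as the paper's own proof: rewriting the weak majorization as partial-sum inequalities whose right-hand sides are pointwise maxima over index subsets of the convex functions $(z_i+\delta)^{\mu}$ (linear in $\pi$ on the left), and expressing the power-sum conditions as concave-in-$\pi$ $\geq$ convex-in-$z$ via $\pi_i^{k/\mu}$ for $k<\mu$. The sorting bookkeeping you flag is handled in the paper exactly as you suggest, through the max-over-subsets representation together with the linear ordering constraints on $\pi$, so no gap remains.
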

\begin{proof}
{See appendix~\ref{sec:proof_prop_1st}.}
\end{proof}

Next, we remark the convexity of the set of LCM functions described by Corollary~\ref{cor:complex}.

\begin{proposition}\label{rem:convex}
Let the shifted zeros and poles angles be fixed in the range
{
\begin{equation}\label{eqn:angle_shifted_poles}
    \vert \theta_i\vert, \vert \phi_j\vert <
    \left\lbrace \begin{array}{ll}
    \pi/2,      &\mu=1 \\
    \pi/2(\mu-1), & \mu>1
    \end{array}\right.
\end{equation}}
for all $i=1,\cdots,n$ and $j=1,\cdots,m$. Then for a fixed $\delta$, {the set of variables $\pi=w^{\downarrow}\in[0,+\infty)^{n+m}$, $v_i\geq 0$ ($1\leq i\leq n$) and $\zeta_i=v_{n+i}^{1/\mu}>0$ ($1\leq i\leq m$) that satisfy the conditions in Corollary~\ref{cor:complex} is a convex set.
}

\end{proposition}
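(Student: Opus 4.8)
The plan is to show that each of the defining conditions of Corollary~\ref{cor:complex}, once re-expressed in the new variables $\pi=w^{\downarrow}$, $v_i$ ($1\le i\le n$) and $\zeta_i=v_{n+i}^{1/\mu}$, becomes either a linear inequality or a convex inequality in $(\pi,v,\zeta)$, so that the feasible set is an intersection of convex sets and hence convex. There are two families of constraints to handle: the weak-majorization relation $w\succ_w v$, and the finitely many inequalities \eqref{eqn:psum(p)>psum(z),1_mu_proper} for $k\in\{1,\dots,\mu-1\}$.

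First I would dispatch the majorization constraint. Writing it out, $w\succ_w v$ means $\sum_{i=1}^\ell \pi_i \ge \sum_{i=1}^\ell v^{\downarrow}_i$ for $\ell=1,\dots,n+m$, where $\pi=w^{\downarrow}$ is already sorted by assumption. The left-hand sides $\sum_{i=1}^\ell \pi_i$ are linear in $\pi$. The right-hand sides $\sum_{i=1}^\ell v^{\downarrow}_i$ are, as is standard in majorization theory (cf.\ \cite{MOA2010}), the pointwise maximum over all $\ell$-subsets of the index set of the corresponding partial sums of the unsorted vector, hence a maximum of finitely many linear functions of the entries of $v$; since the entries of $v$ are themselves affine in $(v_1,\dots,v_n,\zeta_1^{\mu},\dots,\zeta_m^{\mu})$ — wait, they are not affine in $\zeta$, since $v_{n+i}=\zeta_i^{\mu}$. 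So I must be careful: $\sum_{i=1}^\ell v^{\downarrow}_i$ is a convex function of $v$, and $v$ depends convexly on $\zeta$ only if $\mu$ is such that $\zeta\mapsto\zeta^\mu$ is convex, which holds on $[0,\infty)$ for all $\mu\in\mathbb N$. A sum/maximum of convex functions composed with these convex monotone-increasing maps stays convex, so each majorization inequality $\text{(linear in }\pi) \ge \text{(convex in }(v,\zeta))$ defines a convex region. That is the crux of the first family.

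Next I would handle \eqref{eqn:psum(p)>psum(z),1_mu_proper}. For $k\in\{1,\dots,\mu-1\}$ the exponent $k/\mu\in(0,1)$, so $x\mapsto x^{k/\mu}$ is \emph{concave} and increasing on $[0,\infty)$. The left-hand side of \eqref{eqn:psum(p)>psum(z),1_mu_proper} is $\sum_i w_i^{k/\mu}+\sum_i v_i^{k/\mu}\cos(\theta_i k)$. In the new variables $w_i^{k/\mu}$ as a function of $\pi$ (after accounting for the reordering, which only permutes a sum and hence leaves it invariant) is a sum of concave functions, hence concave; similarly $v_i^{k/\mu}$ is concave in $v_i$. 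The sign restriction \eqref{eqn:angle_shifted_poles} is precisely what guarantees $\cos(\theta_i k)\ge 0$ and $\cos(\phi_j k)\ge 0$ for all $k\le\mu-1$ (since $|\theta_i k|<\pi/2$ and $|\phi_j k|<\pi/2$), so the LHS is a nonnegative combination of concave functions, i.e.\ concave. On the right-hand side, $v_{n+i}^{k/\mu}=\zeta_i^{k}$, which is \emph{convex} in $\zeta_i$ (as $k\ge 1$ is a positive integer), multiplied by the nonnegative constant $\cos(\phi_i k)$; a nonnegative sum of convex functions is convex. Hence \eqref{eqn:psum(p)>psum(z),1_mu_proper} reads $(\text{concave}) \ge (\text{convex})$, which again carves out a convex set. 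Intersecting all these convex regions with the obvious box constraints $\pi\in[0,\infty)^{n+m}$, $v_i\ge 0$, $\zeta_i>0$ completes the argument.

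The main obstacle — and the step requiring the most care — is the majorization constraint's dependence on the \emph{sorted} vector $v^{\downarrow}$ while the underlying free variables are the unsorted $v_i$ and $\zeta_i$; one has to invoke the representation of $\sum_{i=1}^\ell v^{\downarrow}_i$ as $\max$ over $\ell$-element index subsets to see convexity, and then track the convexity through the substitution $v_{n+i}=\zeta_i^\mu$. A secondary subtlety is verifying that reordering $w$ into $\pi=w^{\downarrow}$ does not interfere with the sums appearing in \eqref{eqn:psum(p)>psum(z),1_mu_proper}: since $\sum_i w_i^{k/\mu}=\sum_i \pi_i^{k/\mu}$ regardless of order, and the cross terms with $\cos(\theta_i k)$ are tied to specific (fixed-angle) poles rather than to the sorting, this is harmless, but it should be stated explicitly. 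Everything else is routine bookkeeping about which exponents are $\le 1$ (concave side) versus $\ge 1$ (convex side), exactly mirroring the role played by $\mu$ in Theorem~\ref{thm:p^mu>z^mu} and Proposition~\ref{prop:convex_real}.
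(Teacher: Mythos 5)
Your proposal is correct and follows essentially the same route as the paper's proof: rewrite $w\succ_w v$ as partial-sum inequalities whose right-hand sides are pointwise maxima over index subsets (linear in $v_i$, convex in $\zeta_i$ via $v_{n+i}=\zeta_i^{\mu}$), and treat \eqref{eqn:psum(p)>psum(z),1_mu_proper} as a concave-$\geq$-convex inequality using the concavity of $x^{k/\mu}$, the convexity of $\zeta_i^{k}$, and the nonnegativity of the cosines guaranteed by \eqref{eqn:angle_shifted_poles}. The intersection-of-convex-sets conclusion matches the paper exactly.
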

\begin{proof}
{See appendix~\ref{sec:proof_prop_2nd}.}
\end{proof}

Note that the requirement (\ref{eqn:angle_shifted_poles}) can always be satisfied by increasing $\delta$. Proposition~\ref{rem:convex} makes it possible to use the obtained LCM conditions in {a controller synthesis procedure based on} convex optimization without any relaxation or additional conservatism. 

{Consider the set-up in Figure~\ref{fig:control_scheme}, where a} plant $H(s)=B(s)/A(s)$ in (\ref{eqn:transfer_function}) is controlled using the two-degree of freedom strategy
\begin{align*}
    G(s)U(s) &= K_c R(s) - F(s)Y(s).
\end{align*}
This controller structure was considered in \cite{TDJ2021} for discrete-time systems with the same objectives. We follow a similar approach here, choosing the polynomials
\begin{equation}\label{eqn:F,G}
F(s)=\sum_{k=0}^{n_c} f_k s^{n_c-k}, \quad G(s)=\sum_{k=0}^{n_c} g_k s^{n_c-k}    
\end{equation}
and the gain $K_c \in \mathbb{R}$ such that the closed-loop system 
\begin{align}\label{eqn:H^cl}
H^{cl}(s)&=\frac{B^{cl}(s)}{A^{cl}(s)}=\frac{K_c B(s)}{B(s)F(s)+A(s)G(s)}\nonumber\\
&=\frac{ b^{cl}_0 s^{n+n_c}+b^{cl}_1 s^{n+n_c-1}+ \cdots +b^{cl}_{n+n_c}}{ s^{n+n_c}+a^{cl}_1 s^{n+n_c-1}+ \cdots +a^{cl}_{n+n_c}}\nonumber\\
&=K_c K \frac{\prod_{i=1}^{m} (s-z_i)}{\prod_{i=1}^{n+n_c} (s-p^{cl}_i)}
\end{align}
is stable and has {a non-negative impulse response with} zero steady-state tracking error.

\begin{figure}
	\begin{center}
    \includegraphics[width=1\linewidth]{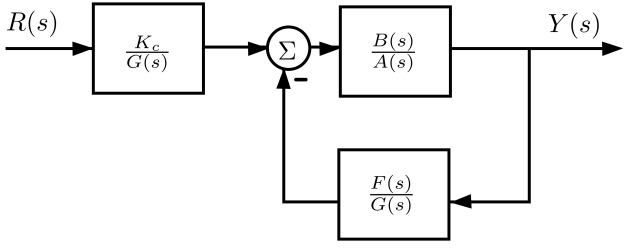}  
	\caption{Output feedback control scheme.}
	\label{fig:control_scheme}
	\end{center}
\end{figure}

As can be observed from (\ref{eqn:H^cl}), the controller leaves the closed-loop zeros at the locations of the (open-loop) plant. Therefore{,} by assuming
$$
n_c=n-1
$$
and that $B(s)$ and $A(s)$ are relatively prime, Sylvester's theorem (see \emph{e.g.}~\cite[Lemma~7.1]{GGS2001}) ensures that we can place the $2n-1$ closed-loop poles arbitrarily without affecting the closed-loop zeros. We first choose
$$\delta>-\min_{j}\lbrace\operatorname{Re}(z_j)\rbrace$$
to determine the region
$$p^{cl}_i\in\lbrace z\in \mathbb{C}\;\vert -\delta<\operatorname{Re}(z)<0\rbrace,
\quad i=1,2,\cdots, 2n-1$$
in which we would like to place the closed-loop poles. This region is chosen such that it is neither too restrictive (small $\delta$), nor {does it allow modes that are too fast} (large $\delta$) leaving the closed-loop system sensitive to noise. Next{,} we choose $\mu\in \mathbb{N}$ in a trade-off between conservatism of the LCM conditions (small $\mu$) and computational complexity (large $\mu$). After that, we choose the shifted closed-loop poles angles $\theta_i=\angle (p^{cl}_i+\delta)$ {as $\theta_i=0$ for $i=1,2,\dots,n_r$ and
$$
\vert \theta_i\vert <\left\lbrace
\begin{array}{ll}
    \pi/2,& \mu=1\\
    \pi/2(\mu-1),& \mu>1
\end{array}\right.
$$
for $i=n_r+1,\dots, 2n-1$, where $n_r$ is the desired number of real-valued closed-loop poles. For the non-real poles to be complex-conjugates, we make sure that $2n-1-n_r$ is an even number (\emph{i.e.}, $n_r$ is odd) and that $$\theta_{n_r+2j-1}=-\theta_{n_r+2j}$$ holds where $j=1,2,\dots (2n-n_r-1)/2$}. Then, we proceed with the variables {$w,v\in \mathbb{R}^{2n+m-1}$} in (\ref{eqn:w,v}) instead of {$p^{cl}$} in the synthesis procedure. This is because while the conditions in Corollary~\ref{cor:complex} are not convex {in the poles}, they are convex in the variables $v, w^{\downarrow}$. This requires enforcing the following affine constraints to the synthesis to ensure a correct change of variables:
\begin{equation}\label{eqn:affine_conds}
\left\lbrace
    \begin{array}{ll}
    w_i,v_i\geq 0, & i=1,2,\dots,2n-1+m \\
    w^{\downarrow}_i=0, & i=n_r+1, \dots,2n-1+m \\
    v_i=0, & i=1, \dots, n_r\\
    v_{2n-1+i}=\vert z_i+\delta\vert^\mu, & i=1,2,\dots,m \\
    v_{n_r+2i-1}=v_{n_r+2i}, & i=1,\dots (2n-n_r-1)/2
    \end{array}
\right.
\end{equation}

We are then ready to choose the {poles magnitudes
$$
\vert p^{cl}_i+\delta\vert = \left\lbrace
\begin{array}{rl}
w_i^{1/\mu}, & 1\leq i \leq n_r \\
v_i^{1/\mu}, & n_r+1\leq i \leq n
\end{array}
\right.
$$
to optimize an appropriate convex objective function $\psi(w,v)$, such that the closed-loop transfer function is stable and LCM according to Corollary~\ref{cor:complex}. The cost function $\psi(w,v)$ can be chosen to meet an additional performance objective. For example, one may choose
\begin{equation}\label{eqn:cost_robustness}
    \psi(w,v)=\vert w^{\downarrow}_1 - \max_{z_i\in\mathbb{R}}(z_i+\delta)^\mu\vert
\end{equation}
which matches the low frequency plant zero with the corresponding closed-loop pole, which helps to achieve a low complementary sensitivity function norm \cite{AsM2010}.}

{Finally, the whole synthesis process} can be formulated as the following standard convex optimization problem {with the decision variables $w^{\downarrow}$ and $v$:
\begin{align}\label{eqn:opt}
    \begin{array}[c]{rll}
    \underset{w^{\downarrow},v}{\text{minimize}} & \psi(w^{\downarrow},v)\\
    \mbox{subject to}
    & w^{\downarrow}_1 \leq \delta^\mu -\epsilon\\
    & w\succ_w v \\
    & (w,v,\theta,\phi) \textnormal{ satisfies } (\ref{eqn:psum(p)>psum(z),1_mu_proper})\\
    & (w,v) \textnormal{ satisfies } (\ref{eqn:affine_conds})
    \end{array}
\end{align}
Here, the first constraint ensures closed-loop stability while the second and third constraints ensure a closed-loop monotonic response. In (\ref{eqn:opt}), $\epsilon$ is a fixed positive number added to make the inequality non-strict, because most, if not all, convex optimization solvers only support non-strict inequalities. Finally, note that to verify the third constraint, $n$ is replaced by $2n-1$ in (\ref{eqn:psum(p)>psum(z),1_mu_proper}) adapting to the closed-loop system order.} The optimal solution $w^{\star},v^{\star}$ to (\ref{eqn:opt}) can be converted back to the zero-pole domain {using (\ref{eqn:w,v}):
\begin{equation}\label{eqn:w,v_backto_pcl}
p^{cl}_j=\left\lbrace
\begin{array}{ll}
    (w^{\star\downarrow}_j)^{1/\mu}-\delta, &  j=1,\cdots, n_{r}\\
    (v^{\star}_j)^{1/\mu}\exp(i\theta_j)-\delta, & j=n_r+1,\cdots, 2n-1
\end{array}\right.
\end{equation}
and the corresponding closed-loop characteristic equation coefficients $a^{cl}$ can be determined via the identity 
\begin{equation}\label{eqn:acl,pcl}
\sum_{k=0}^{2n-1}a^{cl}_k s^{2n-1-k}=\prod_{i=1}^{2n-1}(s-p^{cl}_i)
\end{equation}
We are now ready to compute the controller coefficients in (\ref{eqn:F,G}), by solving the following linear algebraic equation
\begin{equation}\label{eqn:M[f,g]=a^cl}
M\begin{bmatrix} f\\g\end{bmatrix}=a^{cl}
\end{equation}
where
$M\in\mathbb{R}^{2n\times 2n}$ has the elements
$$
[M]_{ij}=\left\lbrace
\begin{array}{lll}
b_{i-j}, & 1\leq j\leq n & j\leq i\leq j+n \\
a_{i-j+n}, & n+1\leq j\leq 2n & j-n\leq i\leq j \\
0, & \textnormal{otherwise} &
\end{array}\right.
$$
Equation (\ref{eqn:M[f,g]=a^cl}) has always a unique solution $f,g${~\cite[Lemma~7.1.]{GGS2001}}. Finally, the static gain $K_c$ is set to
\begin{equation}\label{eqn:Kc}
K_c=\left(B(0)F(0)+A(0)G(0)\right)/B(0)
\end{equation}
to give the closed-loop system from $R$ to $Y$ a stationary gain of one. 
The proposed synthesis procedure is summarized in Algorithm~\ref{alg}.

{
Although the proposed design procedure uses the same controller structure as \cite{TDJ2021} and places the closed-loop poles to ensure the closed-loop system is externally positive, it is different in all other respects. The approach in \cite{TDJ2021} considers discrete-time systems, relies on decomposing the closed-loop system into a series connection of first and second-order transfer functions, and proposes a manual procedure for placing the closed-loop poles on the real axis. The approach in this paper optimizes the closed-loop poles jointly based on the novel LCM conditions and allows for complex closed-loop poles.}

\begin{algorithm} 
\caption{Optimal pole-placement for monotonic controller synthesis:}\label{alg}
\begin{algorithmic}
\Require $K\in \mathbb{R}, z\in \mathbb{R}^m, p\in \mathbb{R}^n$ \Comment{Input data}
\Require $\mu \in \mathbb{N}$, $\delta$, $\theta$, $\psi(.)$ \Comment{Tuning parameters}
\State $w^{\star},v^{\star} \gets$(\ref{eqn:opt}) \Comment{solve (\ref{eqn:opt})}
\State $p^{cl} \gets w^{\star},v^{\star}$ \Comment{restore the poles via (\ref{eqn:w,v_backto_pcl})}
\State $f,g \gets p^{cl}$ \Comment{via (\ref{eqn:acl,pcl}) and (\ref{eqn:M[f,g]=a^cl})}
\State $K_c \gets f,g$ \Comment{via (\ref{eqn:Kc})}
\end{algorithmic}
\end{algorithm}

The next example demonstrates the power of the proposed synthesis method.

\begin{example}\label{ex:control}
{We are interested in stabilizing the second-order plant \cite{MDB2012}
\begin{equation}\label{eqn:H(s).control.ex}
    H(s)=\frac{s+2}{s^2+0.8s-0.2}
\end{equation}
using output-feedback. First, we consider cascade compensators $C(s)$ following the control law
\begin{equation}\label{cascade}
    U(s)=C(s)\left(R(s)-Y(s)\right)
\end{equation}
In order to also obtain a critically-damped closed-loop system, two proportional controllers can be designed as
\begin{equation}\label{eqn:C0}
    C'_0(s)=6.1665
\quad\textit{and}\quad
C_0(s)=0.2335
\end{equation}
Note that monotonic tracking is not taken into account by the controllers (\ref{eqn:C0}). In order to achieve a monotonic closed-loop step response instead, the first-order controllers
\begin{equation}\label{eqn:C1}
    C'_1(s)=\frac{s+76.6311}{s+10.4821}
\quad\textit{and}\quad
C_1(s)=\frac{s+10.4501}{s+59.581}
\end{equation}
were proposed in \cite{MDB2012}. Yet, the controllers (\ref{eqn:C1}) were derived based on a necessary condition for externally positive and therefore, cannot guarantee monotonic tracking beforehand. Nevertheless, instead of (\ref{cascade}), we can use the control design presented in Section~\ref{sec:control} with $\delta=5$, $\mu=1$ and $\theta_i=0$ for all $i=1,2,3$ to ensure a monotonic tracking. As the cost function, we choose (\ref{eqn:cost_robustness}). The resulting closed-loop step response is plotted in Figure~\ref{fig:step_response}. For comparison, the closed-loop step responses obtained by using the controllers in (\ref{eqn:C0}) and (\ref{eqn:C1}) are also shown in the same figure, which are either non-monotonic or more sluggish. In addition, the controller designed in this paper yields a smaller sensitivity peak ($M_s=1$) compared to the controllers $C_0(s)$, $C'_1(s)$ and $C_1(s)$ which respectively result in $Ms=1.1678$, $1.3403$ and $1.2127$. The sensitivity peak given by $C'_0(s)$ is also equal to $M_s=1$.}
\end{example}

\begin{figure}
\begin{center}
        \includegraphics[width=1\linewidth]{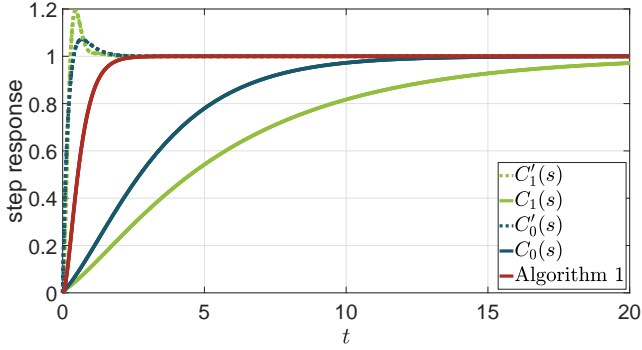}
	\caption{{The step responses of the closed-loop system in Example~\ref{ex:control} using different controllers.}}
	\label{fig:step_response}
\end{center}
\end{figure}

\section{Conclusions}\label{sec:con}

We have investigated the class of logarithmically completely monotonic (LCM) rational transfer functions. {Several conditions to determine when a transfer function is LCM were provided and expressed in terms of the transfer function’s zeros and poles. This includes conditions that are sufficient (Theorem~\ref{thm:p^mu>z^mu} and Corollary~\ref{cor:complex}), necessary (Proposition~\ref{prop:necessary_conditions}) and both necessary and sufficient (Lemma~\ref{lem:N&S_sumexp}).} It was shown that LCM rational functions are a subset of the space of externally positive transfer functions. As such, the LCM property was shown to be useful for providing strong and computationally tractable conditions to ensure that a transfer function has a non-negative impulse response. These results were then used to design output-feedback control loops that monotonically track reference changes without steady-state errors. Compared to existing approaches for non-overshooting reference tracking, the presented approach offered {a monotonic closed-loop response} guarantee in an output-feedback setting by blending pole-placement with convex optimization. {Future work will explore relaxing the conservatism of the conditions further and applying the synthesis procedure to practical problems.}

\appendix
\section{Appendix}
\subsection{Proof of Proposition~\ref{prop:convex_real}}\label{sec:proof_prop_1st}
{
\noindent 
Define the following variables in $\mathbb{R}^n$:
$$
\pi \defeq (p+\delta)^{\mu\downarrow},
\quad
\zeta \defeq (z+\delta)^{\mu}
$$
where the order of components is only fixed for $\pi$, \emph{i.e.},
\begin{equation}\label{eqn:x_ordering}
    \pi_1\geq \pi_2 \geq \cdots \geq \pi_n
\end{equation}
The majorization inequality $(p+\delta) ^{\mu} \succ_w  (z+\delta)^{\mu}$ can be equivalently expressed in terms of $\pi$ and $\zeta$ using the following intersection of $n$ conditions
\begin{equation}\label{eqn:mjrzn_rewrite}
\left\lbrace
\begin{array}{l}
\pi_1 \geq \max\lbrace \zeta_1,\zeta_2,\dots,\zeta_n \rbrace \\
\pi_2+\pi_2 \geq \max\lbrace \zeta_1+\zeta_2,\zeta_1+\zeta_3,\dots,\zeta_{n-1}+\zeta_n\rbrace \\
\vdots \\
\pi_1+\pi_2+\dots+\pi_n \geq \zeta_1+\zeta_2+\dots+\zeta_n
\end{array}
\right.
\end{equation}
Moreover,  $\delta>-\min_{i,j}\lbrace p_i,z_j\rbrace$, can be expressed as
\begin{equation}\label{eqn:z,p+mu>0}
    z_i+\delta >0 \textnormal{ and } p_i+\delta >0, \quad i=1,2,\dots,n
\end{equation}
In the range $x\in (0,+\infty)$, the monomial function $x^{\mu}$ is convex in $x$. Therefore, $\zeta_i=(z_i+\delta)^{\mu}$ is a convex function of $z$. As the point-wise maximum of convex functions is also convex, the right-hand side of each inequality in (\ref{eqn:mjrzn_rewrite}) is a convex function of $z$. In addition, as the left side of each inequality in (\ref{eqn:mjrzn_rewrite}) is linear in $\pi$, all the conditions in (\ref{eqn:mjrzn_rewrite}) are convex in the variables $\pi$ and $z$.
Next we show that (\ref{eqn:psum(p)>psum(z),1_mu}) is also a convex condition in the same variables. Inequality (\ref{eqn:psum(p)>psum(z),1_mu}) can be written as
\begin{equation}\label{eqn:x,z,1_mu}
    \sum_{i=1}^n \pi_i^{k/\mu} \geq \sum_{i=1}^n (z_i+\delta)^k  
\end{equation}
where $k\in \lbrace 1,2,\cdots,\mu -1\rbrace$. From (\ref{eqn:z,p+mu>0}) and the fact that the monomial function $x^{k/\mu}$ is concave in the range $x\in(0,+\infty)$, the left-hand side of (\ref{eqn:x,z,1_mu}) is a concave function of $\pi$. Using a similar arguement, from (\ref{eqn:z,p+mu>0}) and the fact that the monomial function $x^{k}$ is convex in the range $x\in (0,+\infty)$, we concude that the right-hand side of (\ref{eqn:x,z,1_mu}) is a convex function of $z$. This proves that inequality (\ref{eqn:x,z,1_mu}) is also a convex condition in $\pi$ and $z$.
As the intersection of several convex conditions (\ref{eqn:x,z,1_mu}), (\ref{eqn:mjrzn_rewrite}) and (\ref{eqn:x_ordering}), the condition given by Theorem~\ref{thm:p^mu>z^mu} is convex in the variables $\pi=(p+\delta)^{\mu\downarrow}$ and $z$.
\hfill$\square$}

\subsection{Proof of Proposition~\ref{rem:convex}}\label{sec:proof_prop_2nd}
{
\noindent
Define the variables $\pi\in\mathbb{R}^{n+m}$ and $\zeta\in\mathbb{R}^m$ where:
\begin{align*}
\pi &\defeq w^{\downarrow},\\
\zeta_i &\defeq v_{n+i}^{1/\mu}, \quad i=1,2,\dots,m
\end{align*}
where the order of components is only fixed for $\pi$, \emph{i.e.},
$$
    \pi_1\geq \pi_2 \geq \cdots \geq \pi_{n+m}
$$
We will show that the condition of Corollary~\ref{cor:complex} is convex in the variables $\pi$, $v_i$ ($i=1,2,\dots,n$) and $\zeta$. First, inequality (\ref{eqn:psum(p)>psum(z),1_mu_proper}) can be equivalently written as
\begin{equation}\label{eqn:pi,y,v,1_mu_proper}
    \sum_{i=1}^n \pi_i^{k/\mu}+\sum_{i=1}^n v_i^{k/\mu}\cos(\theta_i k) \geq \sum_{i=1}^{m} \zeta_i^{k}\cos(\phi_i k) 
\end{equation}
Since $\delta>-\min_{i,j}\lbrace \operatorname{Re}(p_i),\operatorname{Re}(z_j)\rbrace$, we have
\begin{align*}
    \pi_i\geq 0,\quad & i=1,2,\dots,n+m \\
    v_i\geq 0,\quad & i=1,2,\dots,n \\
    \zeta_i\geq 0,\quad & i=1,2,\dots,m
\end{align*}
Also, the monomial function $x^{k/\mu}$ ($k\in \lbrace 1,2,\cdots,\mu -1\rbrace$) is concave in the range $x\in[0,+\infty)$. Therefore, both the functionals $\pi_i^{k/\mu}$ and $v_i^{k/\mu}$ are concave, where $i=1,2,\dots,n$. 
On the other hand, Condition (\ref{eqn:angle_shifted_poles}) ensures
\begin{equation}\label{eqn:cos>0}
    \cos(\theta_i k),\cos(\phi_i k)\geq 0
\end{equation}
in (\ref{eqn:pi,y,v,1_mu_proper}). Therefore, the left side of (\ref{eqn:pi,y,v,1_mu_proper}) is a concave function of $\pi$ and $v_i$ ($1\leq i\leq n$). For the right side, we note that the monomial function $x^{k}$ ($k\in \lbrace 1,2,\cdots,\mu -1\rbrace$) is convex in the range $x\in[0,+\infty)$. Hence all the functionals $\zeta_i^k$ ($i=1,2,\dots,m$) are convex. This along with (\ref{eqn:cos>0}) proves that the right-hand side of inequality (\ref{eqn:pi,y,v,1_mu_proper}) is a convex function of $\zeta$. Thus, we conclude that inequality (\ref{eqn:pi,y,v,1_mu_proper}) is a convex condition in the variables $\pi$, $v_i$ ($1\leq i\leq n$) and $\zeta$. Next, we show that inequality $w \succ_w v$ is also a convex condition. This
inequality can be written as
\begin{equation}\label{eqn:expanded_w>v}
\sum_{i=1}^k w^{\downarrow}_i \geq \max_{\omega \in \Omega_k}\left\lbrace \sum\nolimits_{i=1}^{n+m} \omega_i v_i\right\rbrace
\end{equation}
for $k=1,2,\dots, n+m$, where
$$
\Omega_k=\left\lbrace \omega\in \lbrace 0,1\rbrace^{n+m}\vert
\sum\nolimits_{i=1}^{n+m} \omega_i=k
\right\rbrace
$$
which, in terms of $\pi$, $v_i$ ($1\leq i\leq n$) and $\zeta$ is given by
\begin{equation}\label{eqn:intermsofzeta}
\sum_{i=1}^k \pi_i \geq \max_{\omega \in \Omega_k}\left\lbrace \sum\nolimits_{i=1}^{n} \omega_i v_i
+\sum\nolimits_{i=1}^{m} \omega_{n+i} \zeta_i^{\mu}
\right\rbrace
\end{equation}
The right-hand side of (\ref{eqn:intermsofzeta}) is a convex function of $v_i$ ($1\leq i\leq n$) and $\zeta$ because it is the point-wise maximum of several convex functions, each represented by an $\omega\in \Omega_k$. Since the left-hand side of (\ref{eqn:expanded_w>v}) is linear in $\pi$, inequality (\ref{eqn:expanded_w>v}) is also a convex condition. Hence, the intersection of conditions (\ref{eqn:expanded_w>v}) and (\ref{eqn:pi,y,v,1_mu_proper}) is convex and therefore, the set of variables that satisfy the conditions in Corollary~\ref{cor:complex} is a convex set.
\hfill$\square$}

\bibliography{arxiv}
\bibliographystyle{ieeetr}

\end{document}